\newtheorem*{conj*}{Conjecture}
\newtheorem{theorem}{Theorem}[section]
\theoremstyle{definition}
\newtheorem*{remark}{Remark}
\theoremstyle{plain}
\newtheorem{lemma}[theorem]{Lemma}
\newtheorem{prop}[theorem]{Proposition}
\newtheorem{corollary}[theorem]{Corollary}
\newcommand{\Z}{\mathbb{Z}}
\newcommand{\R}{\mathbb{R}}
\newcommand{\C}{\mathbb{C}}
\renewcommand{\pmod}[1]{\,\,({\rm mod}\,\,{#1})}
\DeclareMathOperator\Log{Log}
\numberwithin{equation}{section}
\newtheoremstyle{example}
  {\topsep}   % ABOVESPACE
  {\topsep}   % BELOWSPACE
  {\normalfont}  % BODYFONT
  {0pt}       % INDENT (empty value is the same as 0pt)
  {\bfseries} % HEADFONT
  {.}         % HEADPUNCT
  {5pt plus 1pt minus 1pt} % HEADSPACE
  {}          % CUSTOM-HEAD-SPEC
\theoremstyle{example}
\newtheorem*{example}{Example}
\def\({\left(}
\def\){\right)}
\def\lp{\left(}
\def\rp{\right)}
\title{On the Number of Parts in Congruence Classes for Partitions into Distinct Parts}
\author{William Craig}
\address{Department of Mathematics\\University of Virginia\\ Kerchof Hall 112\\ 141 Cabell Drive \\ Charlottesville, VA 22903}
\email{wlc3vf@virginia.edu}
\keywords{Parts in partitions, Distinct parts, Asymptotics, Circle Method}
\subjclass[2020]{05A17,11P82,11P81}
\begin{document}

\begin{abstract} 
For integers $0 < r \leq t$, let the function $D_{r,t}(n)$ denote the number of parts among all partitions of $n$ into distinct parts that are congruent to $r$ modulo $t$. We prove the asymptotic formula
$$D_{r,t}(n) \sim \dfrac{3^{\frac 14} e^{\pi \sqrt{\frac{n}{3}}}}{2\pi t n^{\frac 14}} \left( \log(2) + \lp \dfrac{\sqrt{3} \log(2)}{8\pi} - \dfrac{\pi}{4\sqrt{3}} \lp r - \dfrac{t}{2} \rp \rp n^{- \frac 12} \right)$$
as $n \to \infty$. A corollary of this result is that for $0 < r < s \leq t$, the inequality $D_{r,t}(n) \geq D_{s,t}(n)$ holds for all sufficiently large $n$. We make this effective, showing that for $2 \leq t \leq 10$ the inequality $D_{r,t}(n) \geq D_{s,t}(n)$ holds for all $n > 8$.
\end{abstract}

\maketitle

\section{Introduction and Statement of Results}

A {\it partition} of a positive integer $n$ is a sequence $\lambda = (\lambda_1, \lambda_2, \dots, \lambda_k)$ of weakly decreasing positive integers which sum to $n$. The integers $\lambda_i$ are called the {\it parts} of the partition $\lambda$, and we say a partition $\lambda$ has {\it distinct parts} if the $\lambda_j$ are pairwise distinct. The study of asymptotic and exact formulas for functions which count partitions of $n$ has a rich history going back to famous papers by Hardy and Ramanujan \cite{HardyRamanujan} and Rademacher \cite{RademacherExact}. 

In a different direction, Beckwith and Mertens \cite{BeckwithMertens2015, BeckwithMertens2017} have proven an asymptotic formula for the number of parts among all partitions of $n$ lying in specified congruence classes. For positive integers $1 \leq r \leq t$ and $n \geq 1$, they define the function\footnote{In \cite{BeckwithMertens2017} the function $T_{r,t}(n)$ is denoted $\widehat{T}_{r,t}(n)$.}
\begin{align*}
    T_{r,t}(n) := \sum_{\lambda \vdash n} \# \{ \lambda_j : \lambda_j \equiv r \pmod{t} \},
\end{align*}
where as usual $\lambda \vdash n$ means that $\lambda$ is a partition of $n$. In \cite{BeckwithMertens2017}, Beckwith and Mertens prove that as $n \to \infty$, we have the asymptotic formula
\begin{align*}
    T_{r,t}(n) = e^{\pi \sqrt{\frac{2n}{3}}} n^{- \frac 12} \dfrac{1}{4\pi t \sqrt{2}} \left[ \log\lp n \rp - \log\lp \frac{\pi^2}{6} \rp - 2 \lp \psi\lp \frac{r}{t} \rp + \log\lp t \rp \rp + O\lp n^{- \frac 12} \log\lp n \rp \rp \right],
\end{align*}
where $\psi(x) := \frac{\Gamma^\prime(x)}{\Gamma(x)}$ is the {\it digamma function}.

One of the main goals of this paper is to prove the analog of this result for partitions into distinct parts. In analogy with $T_{r,t}(n)$, for integers $1 \leq r \leq t$ and $n \geq 1$, we define
$$D_{r,t}(n) := \sum_{\substack{\lambda \vdash n \\ \lambda \in \mathcal{D}}} \# \{ \lambda_j : \lambda_j \equiv r \pmod{t} \},$$
where $\mathcal{D}$ denotes the collection of all partitions into distinct parts. We prove the following asymptotic formula for $D_{r,t}(n)$:

\begin{theorem} \label{MAIN}
As $n \to \infty$, we have
\begin{align*}
    D_{r,t}(n) = \dfrac{3^{\frac 14} e^{\pi \sqrt{\frac{n}{3}}}}{2\pi t n^{\frac 14}} \left( \log(2) + \lp \dfrac{\sqrt{3} \log(2)}{8\pi} - \dfrac{\pi}{4\sqrt{3}} \lp r - \dfrac{t}{2} \rp \rp n^{- \frac 12} + O\lp n^{-1} \rp \right).
\end{align*}
\end{theorem}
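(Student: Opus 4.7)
The strategy is the circle method, following the two-parameter generating-function setup of Beckwith-Mertens adapted to distinct parts. The starting point is the identity
\begin{align*}
\sum_{n\geq 1}D_{r,t}(n)q^n \;=\; \prod_{m\geq 1}(1+q^m)\,\sum_{\substack{m\geq 1\\ m\equiv r\,(\mathrm{mod}\,t)}}\frac{q^m}{1+q^m} \;=:\; Q(q)\,L_{r,t}(q),
\end{align*}
obtained by marking parts in the target residue class with a variable $z$, forming $F(z,q) = \prod_{m\not\equiv r}(1+q^m)\prod_{m\equiv r}(1+zq^m)$, and computing $\partial_z F|_{z=1}$. This realizes $D_{r,t}(n)$ as the $q^n$-Cauchy integral of $G_{r,t}(q):=Q(q)L_{r,t}(q)$, which I would evaluate on the circle $|q|=e^{-\tau_0}$ with $\tau_0=\pi/(2\sqrt{3n})$, the saddle of the dominant exponential $\pi^2/(12\tau)+n\tau$.

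The major arc requires a two-term asymptotic expansion of $G_{r,t}(e^{-\tau})$ as $\tau\to 0^+$. For the product, the modular transformation of the Dedekind eta function, applied to $Q(q)=\prod(1-q^{2n})/\prod(1-q^n)$, yields
\begin{align*}
Q(e^{-\tau}) \;=\; \frac{1}{\sqrt{2}}\exp\!\left(\frac{\pi^2}{12\tau}+\frac{\tau}{24}\right)\bigl(1+O(e^{-c/\tau})\bigr).
\end{align*}
For the Lambert-type sum, expanding $q^m/(1+q^m)=\sum_{k\geq 1}(-1)^{k-1}q^{km}$ identifies the Mellin transform of $L_{r,t}(e^{-\tau})$ in $\tau$ as $\Gamma(s)\eta(s)t^{-s}\zeta(s,r/t)$, where $\eta$ is the Dirichlet eta function and $\zeta(\cdot,\cdot)$ is the Hurwitz zeta. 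Shifting the Mellin contour left past the simple poles at $s=1,0,-1$, and using the residue $1$ of $\zeta(s,r/t)$ at $s=1$ together with $\eta(1)=\log 2$, $\eta(0)=\tfrac12$, and $\zeta(0,r/t)=\tfrac12-r/t$, I obtain
\begin{align*}
L_{r,t}(e^{-\tau}) \;=\; \frac{\log 2}{t\,\tau} \;+\; \frac{t-2r}{4t} \;+\; O(\tau).
\end{align*}

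With these expansions the Laplace evaluation of the major arc is standard. Substituting $\tau=\tau_0(1+iu)$ and extending $u$ to $\mathbb R$ (the tails being exponentially small), the integrand reduces to a Gaussian-type kernel with effective phase $-au^2/(1+iu)$, where $a=\pi^2/(12\tau_0)\asymp\sqrt{n}$. The leading term $\log 2/(t\tau)$ produces the main asymptotic $\frac{3^{1/4}\log 2\, e^{\pi\sqrt{n/3}}}{2\pi t\, n^{1/4}}$. The $n^{-1/2}$ correction collects several contributions that must be bookkept together: the constant $\frac{t-2r}{4t}$ in the expansion of $L_{r,t}$ (producing the $-\frac{\pi}{4\sqrt 3}(r-t/2)$ part), the Taylor expansion of the prefactor $(1+iu)^{-1}$ against the Gaussian, and the $\tau/24$ modular phase of $Q$ (equivalently, the shift to the true saddle $\tau_*=\pi/(2\sqrt{3(n+1/24)})$), which together deliver the $\frac{\sqrt{3}\log 2}{8\pi}$ piece.

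The principal technical obstacle is the minor arc bound. One must dominate $|G_{r,t}(e^{-\tau})|e^{n\mathrm{Re}(\tau)}$ on $|\arg q|$ bounded away from $0$ by an expression exponentially smaller than the main term. For $Q$ this follows from standard modular bounds on the Dedekind eta at non-dominant cusps (equivalently, $(-q;q)_\infty$ is dwarfed on $|q|=e^{-\tau_0}$ away from $q=1$). For $L_{r,t}$ the denominator $1+q^m$ can be as small as order $m\tau_0$ near $q=-1$ for odd $m$, so $L_{r,t}$ is only polynomially bounded in $1/\tau_0$ there; this polynomial factor is harmless, as it is absorbed by the exponential savings in $|Q|$. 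Assembling these estimates with the saddle-point expansion of the major arc yields the claimed asymptotic.
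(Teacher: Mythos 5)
Your proposal is correct in outline and reaches the theorem by the same overall architecture as the paper — the factorization $\mathcal{D}_{r,t}(q)=\xi(q)L_{r,t}(q)$ with $\xi(q)=(-q;q)_\infty$, a two-term major-arc expansion of each factor, a minor-arc bound, and a saddle-point evaluation at $\eta\asymp n^{-1/2}$ — but the two key expansions are obtained by genuinely different techniques. For $\xi$ you invoke the modular transformation of the eta quotient $(q^2;q^2)_\infty/(q;q)_\infty$, which gives the expansion $\frac{1}{\sqrt2}\exp(\frac{\pi^2}{12z}+\frac{z}{24})$ with an exponentially small error; the paper instead runs Euler--Maclaurin summation on $B(z)=\frac{e^{-z}}{z(1-e^{-z})}$ (Lemma \ref{Xi and L z-Expansions} and Lemma \ref{Xi Major Arc}), obtaining the same expansion with only a polynomial error $O(|z|^8)$ — weaker, but in a form that the paper can make fully explicit for the effective Theorem \ref{Effective MAIN}. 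For $L_{r,t}$ you use Mellin inversion of $\Gamma(s)\eta(s)t^{-s}\zeta(s,r/t)$ and collect residues at $s=1,0,-1$; the paper again uses Euler--Maclaurin (Proposition \ref{Euler-Maclaurin Rapid Decay} applied to $E(z)=\frac{e^{-z}}{1+e^{-z}}$), and your residues agree with its coefficients (e.g.\ $\eta(0)\zeta(0,r/t)=\frac{t-2r}{4t}=-\frac12 B_1(\frac rt)$). Your Mellin route is closer to Beckwith--Mertens's original treatment of $T_{r,t}$; the one point to make explicit is that the expansion must hold uniformly as $z\to0$ in a cone $|\arg z|\le\frac{\pi}{2}-\delta$, not just for real $\tau\to0^+$, which the Mellin argument does deliver but which you should state. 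Your bookkeeping of the $n^{-1/2}$ coefficient (constant term of $L_{r,t}$, the Gaussian correction from the prefactor, the $\tau/24$ phase) is exactly what the black-box Proposition \ref{Wright Circle Method} computes via the constants $c_{j,r}$.

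The one place where your sketch is too thin is the minor arc. In Wright's two-arc method the minor arc is all of $10\eta\le|y|<\pi$, not a union of neighborhoods of roots of unity, so "modular bounds on eta at non-dominant cusps" do not directly apply without a Farey dissection you have not set up; moreover the trivial bound $|(-q;q)_\infty|\le(-|q|;|q|)_\infty$ gives no exponential saving at all. What is actually needed is a cancellation argument such as the paper's Lemma \ref{Xi Minor Arc Bound}, which compares $|\Log\xi(q)|$ to $\Log P(|q|)$ and extracts a saving from the $m=1$ term via a lower bound on $|1-q|$; the saving is small (roughly $e^{0.82/\eta}$ versus $e^{0.8225/\eta}$) but suffices. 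You are right that the polynomial blow-up of $L_{r,t}$ near $q=-1$ is harmless (the paper's Lemma \ref{L Minor Arc Bound} gives $|L_{r,t}|<\eta^{-2}$ everywhere), but the exponential saving must come from $\xi$ and requires this explicit argument rather than an appeal to modularity.
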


\begin{remark}
The asymptotic formula derived in Theorem \ref{MAIN} has an analog for similar restricted partition functions.
\end{remark}

The proof of Theorem \ref{MAIN} is in the spirit of the circle method (see for example \cite{Wright}). In many applications of the circle method, the generating functions are related to modular forms, which is critical for obtaining exact formulas. Here, the generating functions are not modular, but the theme which connects the size of the generating function near the unit disk to the size of its coefficients still applies. The growth of the generating function cannot be computed using modular transformation laws, which blocks any exact formula. However, since logarithms of $q$-Pochhammer symbols and similar types of infinite sums resemble generating functions for Bernoulli polynomials, classical Euler--Maclaurin summation provides a different route to compute asymptotic expansions. This asymptotic expansion can then be used to produce an asymptotic expansion for $D_{r,t}(n)$ via a version of the circle method originally due to Wright. \\

\begin{example}
We consider the case $t = 3$ to illustrate the accuracy of the approximation of $D_{r,3}(n)$ in Theorem \ref{MAIN}. Let $\widehat{D}_{r,t}(n)$ denote the main term of $D_{r,t}(n)$ from Theorem \ref{MAIN}. Additionally, let $Q_r(n) := \frac{D_{r,3}(n)}{ \widehat{D}_{r,3}(n)}$. The following table illustrates the convergence of $Q_r(n)$ to 1 as $n \to \infty$.

\begin{center}
\begin{tabular}{|c|c|c|c|c|c|} \hline
    $n$ & 10 & 100 & 1000 & 10000 \\ \hline
    $Q_1(n)$ & 1.159706 & 1.002613 & 1.001068 & 1.000365 \\ \hline
    $Q_2(n)$ & 0.904238 & 1.003913 & 1.001204 & 1.000378 \\ \hline
    $Q_3(n)$ & 1.167157 & 1.008440 & 1.001641 & 1.000422 \\ \hline
\end{tabular}

\vspace{0.1in}

{\small Table 1: Numerics for Theorem \ref{MAIN}.}
\end{center}
\end{example}

In their work, Beckwith and Mertens note that their asymptotic for $T_{r,t}(n)$ implies that whenever $1 \leq r < s \leq t$, the inequality $T_{r,t}(n) \geq T_{s,t}(n)$ holds for all sufficiently large $n$. We note that Theorem \ref{MAIN} gives the analogous corollary for $D_{r,t}(n)$. In fact, numerics suggest that the inequality for $T_{r,t}(n)$ has no counterexamples and that for $D_{r,t}(n)$ the counterexamples\footnote{In fact, numerics suggest that the only tuples $(r,s,n)$ which can be counterexamples are $(1,2,2)$, $(2,3,4)$, $(2,4,4)$, $(3,4,7)$, and $(4,5,8)$. Each of these are counterexamples for all sufficiently large $t$, which is clear when the relevant counts are written out explicitly.} occur only for $n \leq 8$. In support of this conjecture, we prove the following effective version of Theorem \ref{MAIN}.

\begin{theorem} \label{Effective MAIN}
For any integer $t \geq 2$ and all integers $n > \frac{400 t^2}{3}$, we have
\begin{align*}
    \left| D_{r,t}(n) - \dfrac{\log(2)}{t} V_0(n) + \dfrac{1}{2} B_1\lp \dfrac rt \rp V_1(n) - \frac{t}{8} B_2\lp \dfrac rt \rp V_2(n) + \dfrac{t^3}{192} B_4\lp \dfrac rt \rp V_4(n) \right| \leq \mathrm{Err}_t(n),
\end{align*}
where $B_n(x)$ are the Bernoulli polynomials defined in \eqref{Bernoulli Polynomial Definition}, $\mathrm{Err}_t(n)$ is defined in \eqref{E_t Definition}, and $V_s(n)$ is defined in \eqref{V Definition}.
\end{theorem}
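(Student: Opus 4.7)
The plan is to refine the circle-method argument used for Theorem \ref{MAIN} to an effective version in which every remainder is explicit in both $n$ and $t$. The starting point is the generating-function identity
$$\sum_{n \geq 0} D_{r,t}(n) q^n \;=\; \prod_{k \geq 1}(1+q^k) \cdot \sum_{m \geq 0} \frac{q^{mt+r}}{1+q^{mt+r}},$$
together with Cauchy's integral formula on the circle of radius $e^{-\delta}$, where $\delta$ is chosen near the saddle point $\pi/\sqrt{3n}$ of the dominant exponential $e^{\pi \sqrt{n/3}}$. As in the proof of Theorem \ref{MAIN}, one splits this contour into a major arc near $q=1$ and a complementary minor arc.

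On the major arc, I would apply an effective form of Euler--Maclaurin summation to both factors of the generating function, keeping all terms up through the $B_4(r/t)$ contribution and bounding the Euler--Maclaurin remainder explicitly in $n$ and $t$. The leading term $\tfrac{\log(2)}{t} V_0(n)$ arises from the limit of $\sum_{m \geq 0} q^{mt+r}/(1+q^{mt+r})$ as $q \to 1^-$, while the alternating $B_1(r/t)$, $B_2(r/t)$, and $B_4(r/t)$ contributions appear as successive boundary terms in the Euler--Maclaurin expansion, with the $B_3(r/t)$ contribution dropping out in the final form (either by explicit cancellation against the minor-arc piece or by being absorbed into the remainder). Integrating each term back against $q^{-n-1}$ along the major arc yields precisely the integrals $V_0(n), V_1(n), V_2(n), V_4(n)$.

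On the minor arc, I would use explicit bounds for $|\prod_{k\geq 1}(1+q^k)|$ and for $|\sum_{m\geq 0} q^{mt+r}/(1+q^{mt+r})|$ away from $q=1$ to show that the minor-arc contribution is dwarfed by the main term for $n > 400 t^2/3$. Combining this explicit minor-arc bound with the Euler--Maclaurin remainder from the major arc then produces the function $\mathrm{Err}_t(n)$ in the statement.

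The main obstacle will be tracking every factor of $t$ through the Euler--Maclaurin boundary derivatives: the $k$-th derivative of $x \mapsto q^{xt+r}/(1+q^{xt+r})$ with respect to $x$ carries a factor of $t^k$, which through the saddle-point scale $\delta \asymp n^{-1/2}$ becomes of order $(t/\sqrt{n})^k$. Requiring this to be uniformly small (in $r$) enough for the truncated expansion to produce the claimed bound is exactly what forces the threshold $n > \tfrac{400 t^2}{3}$, and dictates why the Euler--Maclaurin expansion is cut off at the $B_4$ term rather than continued further.
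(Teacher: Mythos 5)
Your outline follows the paper's proof essentially step for step: Cauchy's formula on the circle $|q|=e^{-\eta}$ with $\eta=\pi/\sqrt{12n}$ (note the saddle of $\frac{\pi^2}{12z}+nz$ is $\pi/\sqrt{12n}$, not $\pi/\sqrt{3n}$), a major arc $|y|<10\eta$ and complementary minor arc, effective Euler--Maclaurin applied to $E(z)=\frac{e^{-z}}{1+e^{-z}}$ and to $B(z)$ for the two factors of $\mathcal{D}_{r,t}(q)=\xi(q)L_{r,t}(q)$, and explicit minor-arc bounds; the threshold $n>\frac{400t^2}{3}$ is indeed exactly the condition $\eta<\frac{\pi}{40t}$ that keeps $|tz|$ small enough for the truncated expansions. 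One correction: the missing $B_3(r/t)$ term neither cancels against the minor arc (which is exponentially smaller) nor can it be absorbed into the remainder --- a genuine $z^2$ term would contribute at order $n^{-1}$ times the main exponential, far larger than $\mathrm{Err}_t(n)$. It is absent because the relevant Euler--Maclaurin coefficient is $c_2=e_2/2!$ with $e_n=\frac{1-2^{n+1}}{n+1}B_{n+1}$, so $e_2=0$ since $B_3=0$; the vanishing of all $e_{2k}$ for $k\geq 1$ is precisely why the expansion jumps from the $B_2$ term to the $B_4$ term.
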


Theorem \ref{MAIN} is proved using a variation of the circle method that traces back to Wright. This variation of the circle method uses asymptotic formulas for a given generating function to produce asymptotic formulas for its coefficients. We prove Theorem \ref{Effective MAIN} by making the asymptotic formulas fully explicit and tracing the effects of error terms throughout the circle method.

Utilizing Theorem \ref{Effective MAIN}, making the inequality $D_{r,t}(n) \geq D_{s,t}(n)$ effective requires a brief argument, combined with a finite computation which is easily carried out with the help of a computer. As a result, we obtain the following corollary.

\begin{corollary} \label{Effective Inequality}
For positive integers $1 \leq r < s \leq t$ we have $D_{r,t}(n) \geq D_{s,t}(n)$ for sufficiently large $n$. In particular, for $2 \leq t \leq 10$ this inequality holds for all $n > 8$.
\end{corollary}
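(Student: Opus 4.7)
The plan is to read both halves of Corollary \ref{Effective Inequality} directly off Theorems \ref{MAIN} and \ref{Effective MAIN} by subtracting the expansions for $(r,t)$ and $(s,t)$; the dominant $r$-independent terms cancel, so the sign is controlled by the first correction that depends on $r$.

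For the asymptotic statement, subtracting the expansion of Theorem \ref{MAIN} applied to $s$ from that applied to $r$ kills the $\log(2)$ main term together with the $r$-free part of the $n^{-1/2}$ correction, leaving
$$D_{r,t}(n) - D_{s,t}(n) = \frac{3^{\frac14} e^{\pi \sqrt{n/3}}}{2 \pi t n^{\frac14}} \lp \frac{\pi (s-r)}{4\sqrt 3}\, n^{-\frac 12} + O\lp n^{-1}\rp \rp,$$
which is strictly positive once $n$ exceeds a threshold large enough to absorb the implicit constant. This settles the inequality for all $t$ and sufficiently large $n$.

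For the effective range $2 \leq t \leq 10$ and $n > 8$, I would apply Theorem \ref{Effective MAIN} to both $(r,t)$ and $(s,t)$ and form the difference. The $V_0(n)$ piece cancels, and since $B_1(x) = x - \tfrac 12$, the $V_1$ contribution collapses to $\frac{s-r}{2t} V_1(n)$, which is strictly positive. The residual $V_2, V_4$ and error contributions are controlled using the fact that $|B_2(r/t) - B_2(s/t)|$ and $|B_4(r/t) - B_4(s/t)|$ are bounded by explicit constants because $r/t, s/t \in (0,1]$, so the task reduces to verifying
$$\frac{s-r}{2t} V_1(n) > \frac{t}{8}\left|B_2\lp\tfrac rt\rp - B_2\lp\tfrac st\rp\right| V_2(n) + \frac{t^3}{192}\left|B_4\lp\tfrac rt\rp - B_4\lp\tfrac st\rp\right| V_4(n) + 2\, \mathrm{Err}_t(n)$$
for every $n > 400 t^2/3$. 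This in turn reduces to explicit size estimates for $V_s(n)$ from \eqref{V Definition} and $\mathrm{Err}_t(n)$ from \eqref{E_t Definition}; one expects each ratio $V_{s+1}(n)/V_s(n)$ and $\mathrm{Err}_t(n)/V_1(n)$ to decay roughly like $n^{-1/2}$, so the hypothesis $n > 400 t^2/3$ (at most $13333$ in our range) should provide comfortable margin.

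What remains is the complementary range $8 < n \leq 400 t^2/3$ with $2 \leq t \leq 10$, a finite set of tuples $(r,s,t,n)$ which I would verify by direct computation: compute $D_{r,t}(n)$ from the partial expansion of $\prod_{k \geq 1}(1+q^k)$ with parts tracked by residue class modulo $t$, and confirm $D_{r,t}(n) - D_{s,t}(n) \geq 0$ in every case. The main obstacle is the effective domination step above; producing an explicit size comparison between $V_1(n)$, the $V_2, V_4$ contributions, and $\mathrm{Err}_t(n)$ uniformly in $t \leq 10$ and $n > 400 t^2/3$ requires patient manipulation of the defining formulas, but once that inequality is in hand the corollary follows by combining it with the finite verification.
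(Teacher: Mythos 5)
Your overall strategy is the same as the paper's: apply Theorem \ref{Effective MAIN} to both residues, cancel the $V_0$ term, observe that the $B_1$ difference produces the positive term $\frac{s-r}{2t}V_1(n)$, dominate the $V_2$, $V_4$ and error contributions, and finish with a finite computer check. (The paper telescopes to consecutive residues $s=r+1$ before doing this, but your direct subtraction works equally well.) The first, purely asymptotic, half of the corollary is also handled correctly.

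There is, however, a concrete quantitative gap in the effective half. You assert that the domination inequality should hold for every $n > 400t^2/3$ (at most $13333$ in the range $t \leq 10$), on the heuristic that $\mathrm{Err}_t(n)/V_1(n)$ decays like $n^{-1/2}$. But $\mathrm{Err}_t(n)$ as defined in \eqref{E_t Definition} carries the enormous explicit constant $945285959087/(tn^4)$ coming from the major-arc bound on $\xi$, and $V_1(n)$ itself only grows like $n^{-3/4}e^{\pi\sqrt{n/3}}$ up to modest constants; as the paper's Table 2 records, the analytic inequality only becomes valid for $n$ exceeding roughly $1.1\times 10^5$ to $1.5\times 10^5$ depending on $t$. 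Consequently your finite verification range $8 < n \leq 400t^2/3$ leaves the interval $400t^2/3 < n \leq N_t$ (with $N_t \approx 10^5$) completely unverified; the computer check must be pushed out to those much larger thresholds. Relatedly, you leave unspecified how to produce the required explicit lower bound on $V_1(n)$ against upper bounds on $V_2(n)$, $V_4(n)$: the paper does this by identifying each $V_s(n)$ with a modified Bessel function $I_{-s}$ up to an explicitly bounded error (Lemma \ref{Bessel Estimates}), which is a genuine ingredient, not mere bookkeeping. With the Bessel-function comparison supplied and the finite check extended to $n \leq N_t$, your argument closes.
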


The paper is organized as follows. In Section \ref{Preliminaries} we recall known results which form the framework of our approach, including Wright's circle method and Euler--Maclaurin summation. In Section \ref{Effective Euler Maclaurin} we construct the generating function of $D_{r,t}(q)$ and derive explicit bounds on infinite sums coming from Euler--Maclaurin summation. Section \ref{Applied Estimates} derives precise bounds for functions connected to the generating function for $D_{r,t}(q)$. Finally, Section \ref{Effective Proofs} then goes through the application of these tools to prove Theorems \ref{MAIN} and \ref{Effective MAIN}, as well as Corollary \ref{Effective Inequality}.

\section*{Acknowledgements}
The author thanks Ken Ono, his Ph.D advisor, and Wei-Lun Tsai for helpful discussions related to the results in this paper. The author also thanks Faye Jackson and Misheel Otgonbayar for informing the author of a mistake in a previous version of the manuscript. The author thanks the support of Ken Ono's grants, namely the Thomas Jefferson Fund and the NSF (DMS-1601306 and DMS-2055118). The author also thanks the anonymous referees for pointing out an error in the original manuscript, as well as for helpful commentary which has improved the exposition of the manuscript.

\section*{Data Availability Statement}
The author implemented a program in Sage in order to perform the finite checks at the end of the paper. This program can be obtained from the author upon reasonable request.

\section{Preliminaries} \label{Preliminaries}

\subsection{Bernoulli and Euler Polynomials}

In this section, we recall the famous {\it Bernoulli polynomials} $B_n(x)$ and {\it Euler polynomials} $E_n(x)$ and several of their properties we will need later. The generating functions for these polynomials are given in \cite[(24.2.3)]{DLMF} by
\begin{align} \label{Bernoulli Polynomial Definition}
    \sum_{n \geq 0} B_n(x) \dfrac{t^n}{n!} := \dfrac{t e^{xt}}{e^t - 1}
\end{align}
and
\begin{align*}
    \sum_{n \geq 0} E_n(x) \dfrac{t^n}{n!} := \dfrac{2 e^{xt}}{e^t + 1}.
\end{align*}
The {\it Bernoulli numbers} $B_n$ are defined by $B_n := B_n(0)$. We require a classical bound of Lehmer \cite{Lehmer} regarding the size of Bernoulli polynomials on $0 \leq x \leq 1$ (and thus also a bound for Bernoulli numbers) which says for $n \geq 2$ that
\begin{align} \label{Bernoulli Inequality}
   \left| B_n(x) \right| \leq \dfrac{2 \zeta(n) n!}{(2\pi)^{n}},
\end{align}
where $\zeta(s) := \sum_{n \geq 1} n^{-s}$ is the {\it Riemann zeta function}. We recall the fact that $B_{2n+1} = 0$ for $n > 0$ (see \cite[(24.2.2)]{DLMF}). We also require the identity
\begin{align} \label{E_n(0) Equation}
    E_n(x) = \dfrac{2}{n+1} \left[ B_{n+1}(x) - 2^{n+1} B_{n+1}\lp \frac{x}{2} \rp \right],
\end{align}
which is \cite[(24.4.22)]{DLMF}.

\subsection{Euler--Maclaurin Summation and Asymptotic Expansions}

In this section, we recall a version of the Euler--Maclaurin summation formula which has been widely used in recent years. The classical Euler--Maclaurin summation formula gives the precise difference between the integral $\int_a^b f(x) dx$ and the finite sum $f(a+1) + \dots + f(b)$ which estimates this integral. In \cite{ZagierMellin}, Zagier observed that this formula has a useful variation when $f(z)$ has a known asymptotic expansion. Here, we use asymptotic expansion in the strong sense, whereby we say $f(z) \sim \sum_{n \geq 0} b_n z^n$ provided for all $N > 0$, $f(z) - \sum_{n=0}^N b_n z^n = O(z^{N+1})$ as $z \to 0$. This asymptotic variation of the Euler--Maclaurin formula has seen increased usage in recent years, see for example \cite{BeckwithMertens2017, BCMO, BJM, BJM2}. This formula is particularly useful for computing the asymptotic growth of products of $q$-Pochhammer symbols that don't have nice modular transformation laws.

We now fix notation which will be used freely for the remainder of the paper. For $\delta > 0$, we define $D_\delta := \{ z \in \mathbb C : \left| \mathrm{arg}(z) \right| < \frac{\pi}{2} - \delta \}$. Note that if we set $z = \eta + iy$ for $\eta > 0$, then $z \in D_\delta$ if and only if $0 < |y| < M\eta$ for some constant $M > 0$ which depends on $\delta$. The {\it modified Bernoulli polynomial} $\widehat{B}_N(x)$ is the periodic function defined by $\widehat{B}_N(x) := B_N\lp x - \lfloor x \rfloor \rp$, where $\lfloor x \rfloor$ is the greatest integer less than or equal to $x$. We also use the {\it Hurwitz zeta function} $\zeta(s,x) := \sum_{n \geq 0} \frac{1}{(n + x)^{s}}$ and the {\it Euler--Mascheroni constant} $\gamma$. We furthermore set
\begin{align*}
    I_f := \int_0^\infty f(x) dx
\end{align*}
for any function $f$ for which this integral converges. The asymptotic formulas we derive require a certain decay condition of $f(x)$ at infinity, which we call {\it sufficient decay}, which holds if $f(x) = O\lp x^{-N} \rp$ as $x \to \infty$ for some $N > 1$. We may now state as a consequence of the classical Euler--Maclaurin formula the following lemma, which is a slightly rewritten form of identities appearing in \cite[Proposition 2.1]{BeckwithMertens2017}, which itself is based on the aforementioned work of Zagier.

\begin{lemma} \label{Euler-Maclaurin Exact}
Suppose that $f(z)$ is $C^\infty$ for $z$ in $D_\delta$ for some $\delta > 0$ such that $f(z)$ and all its derivatives have sufficient decay as $z \to \infty$ in $D_\delta$. Then for any real number $0 < a \leq 1$ and any positive integer $N$, we have
\begin{align*}
    \sum_{m \geq 0} f\lp (m+a)z \rp = \dfrac{1}{z} \int_{az}^\infty f(x) dx + \sum_{n=0}^{N-1} \dfrac{(-1)^n B_{n+1}}{(n+1)!} f^{(n)}(az) z^n - (-z)^N \int_0^\infty f^{(N)}\lp (x+a) z \rp \dfrac{\widehat{B}_N(x)}{N!} dx,
\end{align*}
where $f^{(N)}\lp (x+a)z \rp$ is taken to be a derivative with respect to $x$.
\end{lemma}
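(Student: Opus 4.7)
The plan is to derive the identity by applying the classical Euler--Maclaurin summation formula to the real-variable function $g(x) := f((x+a)z)$ on $[0,\infty)$, treating $z \in D_\delta$ as a fixed parameter. The hypothesis that $f$ is $C^\infty$ on $D_\delta$ and that $f$ together with all its derivatives has sufficient decay guarantees that $g^{(k)}(x) = z^k f^{(k)}((x+a)z)$ is smooth in $x$ and decays rapidly enough for the infinite sum and all tail integrals to converge absolutely.

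The first step is to prove the identity with the sum truncated at $M$ and then pass to the limit. On each interval $[m,m+1]$, one integrates by parts against $B_1(x-m)$, using $B_1'(x) = 1$, to obtain the trapezoidal correction
$$\frac{g(m)+g(m+1)}{2} = \int_m^{m+1} g(x)\,dx + \int_m^{m+1} \widehat{B}_1(x) g'(x)\,dx,$$
and summing over $m = 0,\dots,M-1$ produces the $N=1$ case. Iterating integration by parts against $B_{k+1}(x-m)$ via the identity $B_{k+1}'(x) = (k+1) B_k(x)$ extracts successive boundary terms proportional to $B_{k+1}\bigl(g^{(k)}(m+1) - g^{(k)}(m)\bigr)$; these telescope to $-B_{k+1} g^{(k)}(0)$ upon summing over $m$ and using the decay hypothesis to kill the terms at $M$. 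After $N$ such steps, the remainder is an integral against $\widehat{B}_N(x) g^{(N)}(x)/N!$ with the sign $(-1)^{N+1}$ picked up from the accumulated integrations by parts.

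The second step is translation back to $f$. The change of variable $u = (x+a)z$ converts $\int_0^\infty g(x)\,dx$ into $\tfrac{1}{z}\int_{az}^\infty f(u)\,du$; the chain rule $g^{(n)}(0) = z^n f^{(n)}(az)$ inserts the powers of $z$ into the boundary sum; and the factor $(-z)^N$ in the claimed remainder comes from combining $z^N$ in $g^{(N)}$ with the $(-1)^{N+1}$ sign accumulated during integration by parts. Using $B_{2k+1} = 0$ for $k \geq 1$, every coefficient of $f^{(n)}(az) z^n$ for $n \geq 1$ can be written uniformly as $\tfrac{(-1)^n B_{n+1}}{(n+1)!}$, so only the $n=0$ term requires a separate bookkeeping check against the trapezoidal contribution $\tfrac{g(0)}{2}$.

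The main difficulty is purely notational: keeping track of the signs and the $z^n$ factors so that the boundary coefficients and the remainder collapse into the compact forms displayed in the statement. No substantive analytic obstacle appears, because the parameter $z \in D_\delta$ plays no active role beyond rescaling the substitution and transferring the decay of $f$ to $g$; the argument is a direct iterated integration by parts. As the author notes, the identity is a mild rewriting of the asymptotic form of Euler--Maclaurin due to Zagier and already used in Beckwith--Mertens, so once the bookkeeping is settled the proof is complete.
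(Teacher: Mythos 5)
Your route is genuinely different from the paper's: the paper disposes of this lemma in two lines by citing the proof of Beckwith--Mertens \cite[Proposition 2.1]{BeckwithMertens2017} and performing the substitution $w = (x+a)z$ in the remainder integral, whereas you rederive the identity from scratch by iterated integration by parts against $\widehat{B}_k$ applied to $g(x) := f((x+a)z)$. That self-contained derivation is legitimate, and your handling of the $n \geq 1$ boundary terms and of the remainder's sign $(-1)^{N+1}$ (which combines with $g^{(N)}(x) = z^N f^{(N)}((x+a)z)$ to give the factor $-(-z)^N$) is correct.

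The problem is precisely the one step you postpone. You note that ``only the $n=0$ term requires a separate bookkeeping check against the trapezoidal contribution $\tfrac{g(0)}{2}$,'' but you never perform that check, and it does not in fact close for the formula as printed. Your first display, summed over $m$ and with $M \to \infty$, yields
\begin{align*}
\sum_{m \geq 0} g(m) = \int_0^\infty g(x)\,dx + \frac{g(0)}{2} + \int_0^\infty \widehat{B}_1(x)\, g'(x)\,dx,
\end{align*}
so the derivation produces the boundary term $+\tfrac{1}{2} f(az)$, whereas the statement's $n=0$ term is $\tfrac{(-1)^0 B_1}{1!} f(az) = -\tfrac{1}{2} f(az)$ under the paper's convention $B_1 = B_1(0) = -\tfrac12$ from \eqref{Bernoulli Polynomial Definition}. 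The two differ by $f(az)$, and a direct check with $f(x) = e^{-x}$, $a=1$ confirms that the displayed identity is off by exactly $e^{-z}$ (the correct expansion $\tfrac{1}{z} - \tfrac12 + \tfrac{z}{12} + \cdots$ versus $\tfrac{1}{z} - \tfrac32 + \cdots$ from the printed formula). The coefficient your argument actually establishes is $\tfrac{(-1)^{n+1}B_{n+1}}{(n+1)!}$ (equivalently $-\tfrac{B_{n+1}}{(n+1)!}$, since $B_{n+1}=0$ for even $n \geq 2$), which agrees with the printed $\tfrac{(-1)^{n}B_{n+1}}{(n+1)!}$ for every $n$ except $n=0$, and which is what Propositions \ref{Euler-Maclaurin Rapid Decay} and \ref{Euler-Maclaurin Rapid Decay Effective} (where the $n=0$ contribution is $-c_0 B_1(a)$) actually require. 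So either carry out the deferred check and record the corrected sign, or the argument does not prove the statement as written.
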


\begin{proof}
The proof of \cite[Proposition 2.1]{BeckwithMertens2017} implies with a slight change of variable in the last term that
\begin{align*}
    \sum_{m \geq 0} f\lp (m+a)z \rp = \dfrac{1}{z} \int_{az}^\infty f(x) dx &+ \sum_{n=0}^{N-1} \dfrac{(-1)^n B_{n+1}}{(n+1)!} f^{(n)}(az) z^n \\ &- (-1)^N \int_0^\infty \dfrac{d^N}{dx^N} \left[ f\lp (x+a)z \rp \right] \dfrac{\widehat{B}_N(x)}{N!} dx.
\end{align*}
This is equivalent to the stated formula, as evaluating the inner derivatives brings into view the factor $z^N$ in the last term.
\end{proof}

We now state the asymptotic formula of Bringmann, Jennings-Shaffer and Mahlburg, which is a generalization of \cite[Proposition 2.1]{BeckwithMertens2017} and \cite[Proposition 3]{ZagierMellin}.

\begin{prop}[{\cite[Theorem 1.2]{BJM}}] \label{Euler-Maclaurin Rapid Decay}
Suppose $0 \leq \delta < \frac{\pi}{2}$ and that $f : \C \to \C$ is holomorphic on a domain containing $D_\delta$, in particular containing the origin. Assume that $f(z)$ and all its derivatives have sufficient decay as $z \to \infty$ in $D_\delta$. Then for $a \in \R$ and $N \geq 1$ an integer, we have
$$\sum_{m \geq 0} f\lp (m + a)z \rp \sim \dfrac{I_f}{z} - \sum_{n \geq 0} c_n \dfrac{B_{n+1}(a)}{n+1} z^n$$
uniformly as $z \to 0$ in $D_\delta$.
\end{prop}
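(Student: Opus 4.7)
The plan is to derive Proposition~\ref{Euler-Maclaurin Rapid Decay} directly from the exact Euler--Maclaurin identity of Lemma~\ref{Euler-Maclaurin Exact} by Taylor-expanding every ingredient around $z = 0$ and controlling the integral remainder. Holomorphy of $f$ at the origin supplies a local Taylor expansion $f(z) = \sum_{k \geq 0} c_k z^k$ whose coefficients $c_k = f^{(k)}(0)/k!$ are exactly the constants appearing in the target expansion.

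First I would reduce to the range $0 < a \leq 1$ in which Lemma~\ref{Euler-Maclaurin Exact} is formulated. For arbitrary real $a$ write $a = a_0 + k$ with $a_0 \in (0,1]$ and $k \in \Z$; the finite difference
\[
\sum_{m \geq 0} f\!\bigl((m+a)z\bigr) - \sum_{m \geq 0} f\!\bigl((m+a_0)z\bigr)
\]
reduces to a sum of $|k|$ explicit terms $\pm f\!\bigl((j+a_0)z\bigr)$ that are analytic at $z = 0$. The iterated polynomial shift identity $B_{n+1}(x+1) - B_{n+1}(x) = (n+1)x^n$ then absorbs these boundary terms cleanly into $B_{n+1}(a)/(n+1)$ on the right-hand side, leaving the claim in the original shape for $a_0 \in (0,1]$.

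Next I would apply Lemma~\ref{Euler-Maclaurin Exact} with a large truncation $N$ and split the leading integral as $\frac{1}{z}\int_{az}^\infty f\,dx = I_f/z - \frac{1}{z}\int_0^{az} f\,dx$. Substituting $f(x) = \sum c_k x^k$ inside the small integral and expanding $f^{(n)}(az) = \sum_{j \geq 0} \frac{(n+j)!}{j!}\,c_{n+j}\,a^j z^j$ in the finite sum produces two double series in $z$. Collecting the coefficient of $z^m$ from both sources and comparing it with $-c_m B_{m+1}(a)/(m+1)$ reduces the identity to the classical binomial expansion
\[
B_{m+1}(a) = \sum_{k=0}^{m+1} \binom{m+1}{k} B_k\, a^{m+1-k}.
\]
This reorganization, in which the integral piece and the derivative pieces must fit together exactly into one Bernoulli polynomial, is the step that demands the most bookkeeping care.

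The final step controls the remainder
\[
R_N(z) = -(-z)^N \int_0^\infty f^{(N)}\!\bigl((x+a)z\bigr)\,\frac{\widehat{B}_N(x)}{N!}\,dx.
\]
Since $\widehat{B}_N$ is uniformly bounded on $\R$ and $f^{(N)}$ has sufficient decay on $D_\delta$, the substitution $u = xz$ shows that the integral is $O(z^{-1})$ uniformly as $z \to 0$ in $D_\delta$, so $R_N(z) = O(z^{N-1})$; letting $N$ be arbitrarily large yields the claimed asymptotic expansion to every finite order. The main obstacle beyond the Bernoulli identity will be verifying that the $O(z^{N-1})$ bound is genuinely uniform across the whole wedge $D_\delta$ rather than only along the positive real axis, and this is precisely where the holomorphy of $f$ together with the sufficient decay of \emph{every} derivative is used in tandem.
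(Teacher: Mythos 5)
The paper does not actually prove Proposition~\ref{Euler-Maclaurin Rapid Decay}; it is imported wholesale from \cite[Theorem 1.2]{BJM}, so there is no in-paper argument to compare against. Your plan is correct and is essentially the standard proof (it is also, in effect, what the paper's own Proposition~\ref{Euler-Maclaurin Rapid Decay Effective} does in quantitative form, except that there the coefficient cancellation is short-circuited by citing Proposition~\ref{Euler-Maclaurin Rapid Decay} itself, whereas you verify it directly). Your reduction to $a\in(0,1]$ via $B_{n+1}(x+1)-B_{n+1}(x)=(n+1)x^n$ is fine, and your remainder bound $R_N(z)=O(|z|^{N-1})$ uniformly in $D_\delta$ is exactly right: after $u=(x+a)z$ the integral is $\frac{1}{|z|}\int_0^\infty |f^{(N)}(u)|\,|du|$ along the ray through $z$, which is uniformly bounded by holomorphy at $0$ plus sufficient decay of $f^{(N)}$.

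One concrete trap to flag in the bookkeeping step. If you feed Lemma~\ref{Euler-Maclaurin Exact} into your computation exactly as printed, the coefficient of $z^m$ you obtain is
\begin{align*}
c_m\left[-\frac{a^{m+1}}{m+1}+\sum_{n=0}^{m}\frac{(-1)^nB_{n+1}}{n+1}\binom{m}{n}a^{m-n}\right],
\end{align*}
and matching this to $-c_m B_{m+1}(a)/(m+1)$ requires $B_{m+1}(a)=\sum_{k=0}^{m+1}\binom{m+1}{k}(-1)^kB_k\,a^{m+1-k}$, which is \emph{not} the classical binomial expansion: it differs from it by $(m+1)a^m$ coming from the $k=1$ term. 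The cancellation only closes if the coefficient in the lemma is read as $-\frac{B_{n+1}}{(n+1)!}$ rather than $\frac{(-1)^nB_{n+1}}{(n+1)!}$; these agree for every $n\geq1$ (odd-index Bernoulli numbers vanish) but have opposite signs at $n=0$, where the correct term is $+\frac{1}{2}f(az)$, not $-\frac{1}{2}f(az)$. (A sanity check with $f(x)=e^{-x}$, $a=1$ shows the printed version misses the true sum $\frac{1}{e^z-1}$ by exactly $f(az)=e^{-z}$, which is not $O(z^N)$.) So the identity you should aim for is precisely the classical $B_{m+1}(a)=\sum_{k=0}^{m+1}\binom{m+1}{k}B_k\,a^{m+1-k}$, as you wrote, but you will need to correct the $n=0$ sign in Lemma~\ref{Euler-Maclaurin Exact} before the pieces fit together.
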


The following proposition is a refinement of Proposition \ref{Euler-Maclaurin Rapid Decay} where the function $f(z)$ is allowed to have a pole at the origin. In other words, this extends the conclusion of Proposition \ref{Euler-Maclaurin Rapid Decay} to functions $f(z)$ with principal parts $P_f(z)$ with the added property that $f(z) - P_f(z)$ has sufficient decay at infinity.

\begin{prop}[{\cite[Lemma 2.2]{BCMO}}] \label{Euler-Maclaurin Sufficient Decay}
Let $0 < a \leq 1$ and $A \in \R^+$, and assume that $f(z) \sim \sum_{n=n_0}^{\infty} c_n z^n$ $(n_0\in\Z)$ as $z \rightarrow 0$ in $D_\delta$. Furthermore, assume that $f$ and all of its derivatives are of sufficient decay in $D_\delta$. Then we have that
	\begin{align*}
	\sum_{n=0}^\infty f((n+a)z)\sim \sum_{n=n_0}^{-2} c_{n} \zeta(-n,a)z^{n}+ \frac{I_{f,A}^*}{z}-\frac{c_{-1}}{z} \left( \Log \left(Az \right) +\psi(a)+\gamma \right)-\sum_{n=0}^\infty c_n \frac{B_{n+1}(a)}{n+1} z^n,
	\end{align*}
as $z \rightarrow 0$ uniformly in $D_\delta$, where 
	\begin{align*}
		I_{f,A}^*:=\int_{0}^{\infty} \left(f(u)-\sum_{n=n_0}^{-2}c_{n}u^n-\frac{c_{-1}e^{-Au}}{u}\right)du.
	\end{align*}
\end{prop}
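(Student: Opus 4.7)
The plan is to reduce Proposition \ref{Euler-Maclaurin Sufficient Decay} to the rapid-decay version (Proposition \ref{Euler-Maclaurin Rapid Decay}) by subtracting off the singular behavior of $f$ near the origin in a way that both kills the pole and introduces the promised $\Log(Az)$ term. Concretely, I would split
\[
f(u) = \sum_{n=n_0}^{-2} c_n u^{n} + \frac{c_{-1} e^{-Au}}{u} + g(u), \qquad g(u) := f(u) - \sum_{n=n_0}^{-2} c_n u^{n} - \frac{c_{-1} e^{-Au}}{u}.
\]
By construction the $u^{-1}$ singularity of $f$ is cancelled by that of $c_{-1}e^{-Au}/u$, so $g$ extends to a holomorphic function on (a neighborhood of) $D_\delta$ containing $0$. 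Moreover $g$ inherits sufficient decay from $f$ and the exponential suppression of $e^{-Au}/u$, so Proposition \ref{Euler-Maclaurin Rapid Decay} is applicable to $g$.

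I would then treat each of the three pieces separately under the sum $\sum_{m\ge 0}\,\cdot\,((m+a)z)$. For the first piece, each $n\le -2$ gives $\sum_{m\ge 0} c_n ((m+a)z)^{n} = c_n\zeta(-n,a)\,z^{n}$ by absolute convergence, producing the Hurwitz zeta contribution. For the second piece, factoring out $c_{-1}/z$ identifies the sum as the Lerch transcendent $\Phi(e^{-Az},1,a)$, whose classical small-$z$ expansion is
\[
\sum_{m\ge 0} \frac{e^{-A(m+a)z}}{m+a} = -\Log(Az) - \psi(a) - \gamma + \sum_{k\ge 1} \frac{(-A)^{k}}{k!}\,\zeta(1-k,a)\,z^{k},
\]
which produces the $-\frac{c_{-1}}{z}(\Log(Az)+\psi(a)+\gamma)$ term together with an $A$-dependent Taylor tail. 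For the third piece, applying Proposition \ref{Euler-Maclaurin Rapid Decay} to $g$ yields $\frac{I_g}{z} - \sum_{n\ge 0} c_n^{(g)} \frac{B_{n+1}(a)}{n+1} z^{n}$, where the Taylor coefficients of $g$ at $0$ are $c_n^{(g)} = c_n - \frac{(-A)^{n+1}}{(n+1)!}\,c_{-1}$ for $n\ge 0$, and $I_g = \int_0^\infty g(u)\,du$ is precisely $I^*_{f,A}$ as defined in the statement.

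Summing the three expansions gives the stated formula, and the key consistency check (which I view as the main obstacle) is the cancellation of all $A$-dependent contributions outside of $\Log(Az)$ and $I^*_{f,A}$. Explicitly, the Taylor tail of $\Phi(e^{-Az},1,a)$ supplies $-c_{-1}\sum_{k\ge 1}\frac{(-A)^{k}}{k!}\zeta(1-k,a)z^{k-1}$, while the shift $c_n\mapsto c_n^{(g)}$ in piece three contributes $+ c_{-1}\sum_{n\ge 0}\frac{(-A)^{n+1}}{(n+1)!}\cdot\frac{B_{n+1}(a)}{n+1}z^{n}$; using the identity $\zeta(1-k,a) = -B_k(a)/k$ for $k\ge 1$ one checks termwise that these two tails are negatives of each other. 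What remains is exactly $-\sum_{n\ge 0} c_n \frac{B_{n+1}(a)}{n+1}z^{n}$. A subsidiary technical point is to verify that each of the three expansions is uniform in $D_\delta$ so that they may be combined; for pieces one and two this follows from explicit formulas, and for piece three it is inherited from the uniformity in Proposition \ref{Euler-Maclaurin Rapid Decay}.
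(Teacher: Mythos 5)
Your decomposition $f = (\text{negative powers}) + \frac{c_{-1}e^{-Au}}{u} + g$, the Hurwitz-zeta evaluation of the $n\le -2$ terms, the Lerch/digamma expansion producing $-\frac{c_{-1}}{z}\left( \Log(Az)+\gamma+\psi(a) \right)$, and the termwise cancellation of the $A$-dependent tails via $\zeta(1-k,a)=-\frac{B_k(a)}{k}$ is exactly the argument used here (the paper quotes this statement from \cite{BCMO} but proves its effective analogue, Proposition \ref{Euler-Maclaurin General Effective}, by precisely this decomposition, with your $c_n^{(g)}$ appearing there as $b_n$ and your Lerch identity as the function $H_a$). The proposal is correct; the only caveat is that applying Proposition \ref{Euler-Maclaurin Rapid Decay} to $g$ strictly requires a version valid for functions merely admitting an asymptotic expansion at $0$, since the hypotheses here give only $f(z)\sim\sum c_n z^n$ rather than holomorphy near the origin.
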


\subsection{Variant of Wright's Circle Method}

In this section, we recall a result of Bringmann, Ono, Males, and the author from \cite{BCMO}, which is a variation of the circle method going back to Wright \cite{Wright}. Wright's circle method gives asymptotics for the coefficients of $q$-series $F(q)$ having a nice factorization and suitable analytic properties. Given a circle $\mathcal{C}$ centered at the origin with radius less than 1, we define its {\it major arc} as that region of $\mathcal C$ where $F(q)$ is largest. In our applications, this is given by $\mathcal{C}_1 := \mathcal{C} \cap D_\delta$ for $\delta > 0$. The {\it minor arc} of $\mathcal{C}$ is then defined by $\mathcal{C}_2 := \mathcal{C} \backslash \mathcal{C}_1$. In the circle method, the integral taken over $\mathcal{C}_1$ gives the main term for the coefficients of $F(q)$ and the integral over $\mathcal{C}_2$ is merely an error term.

Here, we recall the version of Wright's circle method which we will use in the proof of Theorem \ref{MAIN}.

\begin{prop}[{\cite[Proposition 4.4]{BCMO}}] \label{Wright Circle Method}
	Suppose that $F(q)$ is analytic for $q = e^{-z}$ where $z=x+iy \in \C$ satisfies $x > 0$ and $|y| < \pi$, and suppose that $F(q)$ has an expansion $F(q) = \sum_{n=0}^\infty c(n) q^n$ near 1. Let $N,M>0$ be fixed constants. Consider the following hypotheses:
	
	\begin{enumerate}
		\item[\rm(1)] As $z\to 0$ in the bounded cone $|y|\le Mx$ (major arc), we have
		\begin{align*}
			F(e^{-z}) = C z^{B} e^{\frac{A}{z}} \left( \sum_{j=0}^{N-1} \alpha_j z^j + O_\delta\left(|z|^N\right) \right),
		\end{align*}
		where $\alpha_s \in \C$, $A,C \in \R^+$, and $B \in \R$. 
		
		\item[\rm(2)] As $z\to0$ in the bounded cone $Mx\le|y| < \pi$ (minor arc), we have 
		\begin{align*}
			\lvert	F(e^{-z}) \rvert \ll_\delta e^{\frac{1}{\mathrm{Re}(z)}(A - \kappa)},
		\end{align*}
		for some $\kappa\in \R^+$.
	\end{enumerate}
	If  {\rm(1)} and {\rm(2)} hold, then as $n \to \infty$ we have for any $N\in \R^+$ 
	\begin{align*}
		c(n) = C n^{\frac{1}{4}(- 2B -3)}e^{2\sqrt{An}} \lp \sum\limits_{r=0}^{N-1} p_r n^{-\frac{r}{2}} + O\left(n^{-\frac N2}\right) \rp,
	\end{align*}
	where $p_r := \sum\limits_{j=0}^r \alpha_j c_{j,r-j}$ and $c_{j,r} := \dfrac{(-\frac{1}{4\sqrt{A}})^r \sqrt{A}^{j + B + \frac 12}}{2\sqrt{\pi}} \dfrac{\Gamma(j + B + \frac 32 + r)}{r! \Gamma(j + B + \frac 32 - r)}$. 
\end{prop}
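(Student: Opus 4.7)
The plan is to apply the circle method of Proposition \ref{Wright Circle Method} to the generating function for $D_{r,t}(n)$. A standard bijective argument --- marking a part $\lambda_j \equiv r \pmod{t}$ in a distinct partition $\lambda$ is the same as choosing $m \equiv r \pmod{t}$ together with a distinct partition of $n$ containing $m$, whose generating function is $q^m \prod_{n \neq m}(1 + q^n) = \frac{q^m}{1+q^m} D(q)$ --- yields the factorization
\begin{align*}
\sum_{n \geq 0} D_{r,t}(n) q^n = D(q) \cdot G_{r,t}(q), \qquad D(q) := \prod_{n \geq 1}(1 + q^n), \qquad G_{r,t}(q) := \sum_{k \geq 0} \frac{q^{r+kt}}{1 + q^{r+kt}}.
\end{align*}
With this identity in hand, the proof reduces to verifying the two hypotheses of Proposition \ref{Wright Circle Method}.

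For the major arc expansion (hypothesis (1)), I would treat the two factors separately. Writing $q = e^{-z}$ and using $D(q) = \prod(1 - q^{2n})/\prod(1 - q^n)$, the modular transformation of the Dedekind eta function produces
\begin{align*}
D(e^{-z}) = \tfrac{1}{\sqrt{2}} e^{\frac{\pi^2}{12 z} + \frac{z}{24}} \bigl( 1 + O(e^{-c/x}) \bigr)
\end{align*}
uniformly on any cone $|y| \leq M x$, so in particular $D(e^{-z}) = \tfrac{1}{\sqrt 2} e^{\pi^2/(12 z)}(1 + z/24 + z^2/1152 + O(z^3))$ up to exponentially small error. For $G_{r,t}(e^{-z})$ I would set $u = t z$ and recognize the series as $\sum_{k \geq 0} f\lp (k + r/t) u \rp$ with $f(w) := (e^w + 1)^{-1}$, then apply Proposition \ref{Euler-Maclaurin Rapid Decay}. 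Using $I_f = \log 2$ together with the Taylor coefficients $c_n = E_n(0)/(2\,n!)$ of $f$ at the origin and the vanishing of $c_{2k}$ for $k \geq 1$, this gives the expansion
\begin{align*}
G_{r,t}(e^{-z}) \sim \frac{\log 2}{t z} - \frac{B_1(r/t)}{2} + \frac{t B_2(r/t)}{8} z - \frac{t^3 B_4(r/t)}{192} z^3 + \cdots.
\end{align*}
Multiplying the two expansions and collecting by powers of $z$ casts $\mathcal{D}_{r,t}(e^{-z})$ into the Wright form $C z^{B} e^{A/z}(\alpha_0 + \alpha_1 z + O(z^2))$ with $A = \pi^2/12$, $B = -1$, $C = \log(2)/(t\sqrt{2})$, $\alpha_0 = 1$, and $\alpha_1$ an explicit linear combination of $1/24$ (from the $e^{z/24}$ in $D(e^{-z})$) and $-tB_1(r/t)/(2\log 2)$ (from the constant term in $G_{r,t}$).

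Hypothesis (2), the minor arc bound, is obtained from classical estimates on $|\eta(\tau)|$ away from the cusp at infinity: for $z = x + iy$ with $Mx \leq |y| < \pi$, the modular transformation of $\eta$ yields $|D(e^{-z})| \leq e^{(\pi^2/12 - \kappa)/x}$ for some $\kappa = \kappa(M) > 0$, and since $G_{r,t}(e^{-z}) = O(1/x)$ on this region the product still satisfies the required exponential savings. With both hypotheses verified, Proposition \ref{Wright Circle Method} produces
\begin{align*}
D_{r,t}(n) = C n^{-\frac{1}{4}} e^{\pi \sqrt{n/3}} \left( p_0 + p_1 n^{-\frac{1}{2}} + O(n^{-1}) \right),
\end{align*}
and substituting $A = \pi^2/12$, $B = -1$ into $p_0 = \alpha_0 c_{0,0}$ and $p_1 = \alpha_0 c_{0,1} + \alpha_1 c_{1,0}$, together with the simplification $12^{1/4}/\sqrt{2} = 3^{1/4}$, matches the stated leading constant $3^{1/4}\log(2)/(2\pi t)$ and the stated $n^{-1/2}$ correction in Theorem \ref{MAIN}. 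The main technical obstacle is the minor arc estimate, where one must argue that $|D(e^{-z})|$ is uniformly smaller by a fixed exponential factor than its value on the positive real axis as soon as $|y|/x$ stays bounded away from zero; the rest of the proof is bookkeeping in the Euler--Maclaurin and Wright formulas.
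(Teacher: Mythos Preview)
You have misidentified the statement you are proving. The statement given is Proposition~\ref{Wright Circle Method}, the \emph{abstract} Wright circle method: a general black-box result saying that if a generating function $F(q)$ satisfies the major-arc expansion (1) and the minor-arc bound (2), then its coefficients $c(n)$ obey the stated asymptotic. This proposition makes no reference to partitions, to $D_{r,t}(n)$, or to any specific $F$. In the paper it is not proved at all; it is simply quoted from \cite[Proposition 4.4]{BCMO}.

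What you have written is instead a proof sketch of Theorem~\ref{MAIN}: you construct the generating function $\mathcal{D}_{r,t}(q) = D(q)\,G_{r,t}(q)$, verify hypotheses (1) and (2) for this particular $F$, and then \emph{invoke} Proposition~\ref{Wright Circle Method} to read off the asymptotic for $D_{r,t}(n)$. This is a perfectly reasonable outline of Theorem~\ref{MAIN} --- indeed it matches the paper's own proof in Section~\ref{Effective Proofs} closely, with the minor variation that you use the modular transformation of $\eta$ for $D(e^{-z})$ where the paper uses Euler--Maclaurin (Lemma~\ref{Xi Major Arc}) --- but it is not a proof of the proposition you were asked about. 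A proof of Proposition~\ref{Wright Circle Method} itself would start from Cauchy's integral formula for $c(n)$, split the circle into major and minor arcs, use hypothesis (2) to show the minor-arc contribution is negligible, and on the major arc substitute the expansion from (1) and evaluate the resulting integrals in terms of $I$-Bessel functions, whose asymptotics produce the constants $c_{j,r}$.
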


\begin{remark}
The constant $C$ in Proposition \ref{Wright Circle Method} does not appear in \cite{BCMO}, but is trivially equivalent to the result in \cite{BCMO} by factoring out $C$ from each $\alpha_i$.
\end{remark}

\subsection{Estimates with Bessel Functions}

We now consider certain estimates with Bessel functions which we will require when effectively implementing Wright's circle method. Recall that the {\it modified Bessel function} $I_\nu(z)$ is defined for any $\nu \in \C$ by
\begin{align*}
    I_\nu(x) := \lp \dfrac x2 \rp^\nu  \dfrac{1}{2\pi i} \int_{\mathcal D} t^{-\nu-1} \exp\lp \dfrac{x^2}{4t} + t \rp dt,
\end{align*}
where $\mathcal D$ is any contour running from $-\infty$ below the negative real axis, counterclockwise around 0, and back to $-\infty$ above the negative real axis. We shall choose $\mathcal D = \mathcal D_- \cup \mathcal D_0 \cup \mathcal D_+$, each of which depend on a particular choice of $z = \eta + iy$ with $\eta = \frac{\pi}{\sqrt{12n}}$ for $n > 0$. These components of $\mathcal D$ are given by
\begin{align*}
    \mathcal D_{\pm} := \{ u + iv \in \C : u \leq \eta, v = \pm 10 \eta \}, \\
    \mathcal D_0 := \{ u + iv \in \C : u = \eta, |v| \leq 10 \eta \}.
\end{align*}
Note that this dependence on $z$ does not change the value of the integral, since one can shift the paths of integration. We shall compare the size of $I_\nu(z)$ to its main term. In particular, define
\begin{align*}
    \widehat{I}_\nu(n) := \lp \dfrac{\pi^2}{12n} \rp^{\frac{\nu}{2}} \dfrac{1}{2\pi i} \int_{\mathcal D_0} t^{-\nu-1} \exp\lp \dfrac{\pi^2}{12t} + \lp n + \dfrac{1}{24} \rp t \rp dt.
\end{align*}
The following lemma shows how $\widehat{I}_\nu(n)$ approximates $I_\nu(z)$ for certain values of $z$.

\begin{lemma} \label{Bessel Estimates}
Let $n \geq 1$ be an integer and $\nu \leq -1$. Then
\begin{align*}
	\left| I_\nu\lp \pi \sqrt{\dfrac{1}{3}\lp n + \dfrac{1}{24} \rp} \rp - \widehat{I}_\nu(n) \right| < 2 \lp \dfrac{2\pi^2}{24n+1} \rp^{\frac{\nu}{2}} \exp\lp \dfrac{3\pi}{4} \sqrt{\dfrac{n}{3}} \rp \int_0^\infty \lp 10 + u \rp^{-\nu - 1} e^{-\lp n + \frac{1}{24} \rp u} du.
\end{align*}
\end{lemma}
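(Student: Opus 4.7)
\emph{Proof plan.} My strategy is to return to the Hankel-contour definition of $I_\nu(x)$ specialized to $x=\pi\sqrt{(n+\tfrac{1}{24})/3}$, apply the change of variable $t\mapsto (n+\tfrac{1}{24})t$ so that the integrand becomes (up to a constant) $t^{-\nu-1}\exp(\tfrac{\pi^2}{12t}+(n+\tfrac{1}{24})t)$, and then deform the resulting contour to $\mathcal{D}_-\cup\mathcal{D}_0\cup\mathcal{D}_+$. Using $\tfrac{x^2}{4}=\tfrac{\pi^2(n+\frac{1}{24})}{12}$, the combined prefactor collapses to $(\tfrac{\pi^2}{12(n+\frac{1}{24})})^{\nu/2}=(\tfrac{2\pi^2}{24n+1})^{\nu/2}$, matching the one in the stated bound. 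The $\mathcal{D}_0$ piece reproduces $\widehat{I}_\nu(n)$, so $I_\nu(x)-\widehat{I}_\nu(n)$ equals the contribution from $\mathcal{D}_\pm$; by the conjugate symmetry $|f(\bar t)|=|f(t)|$ (for real $\nu$ on the principal branch), the two tails contribute equal moduli, which accounts for the factor of $2$ in the lemma.

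The main step is therefore the bound on $|\int_{\mathcal{D}_+} t^{-\nu-1}\exp(\tfrac{\pi^2}{12t}+(n+\tfrac{1}{24})t)\,dt|$. I parameterize $t=u+10i\eta$ with $u\leq\eta$ and substitute $w=\eta-u\geq 0$. For the polynomial factor, a direct algebraic comparison shows $|t|^2=(\eta-w)^2+100\eta^2\leq(w+10)^2$ whenever $\eta^2\leq\tfrac{100}{101}$; this holds since $\eta^2=\tfrac{\pi^2}{12n}\leq\tfrac{\pi^2}{12}<\tfrac{100}{101}$ for all $n\geq 1$, and since $\nu\leq -1$ ensures $-\nu-1\geq 0$, we obtain $|t|^{-\nu-1}\leq(w+10)^{-\nu-1}$. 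For the exponential factor, $\mathrm{Re}(\tfrac{\pi^2}{12t})=\tfrac{\pi^2 u}{12(u^2+100\eta^2)}$ is maximized on $u\leq\eta$ at $u=\eta$ with value $\tfrac{\pi^2}{1212\eta}$; together with $\mathrm{Re}((n+\tfrac{1}{24})t)=(n+\tfrac{1}{24})\eta-(n+\tfrac{1}{24})w$, the real part of the full exponent is at most $\tfrac{\pi^2}{1212\eta}+(n+\tfrac{1}{24})\eta-(n+\tfrac{1}{24})w$. Substituting $\eta=\pi/\sqrt{12n}$, the $w$-independent portion equals $\pi\sqrt{3n}(\tfrac{1}{606}+\tfrac{1}{6})+\tfrac{\pi}{144\sqrt{3n}}$, which sits comfortably below $\tfrac{\pi\sqrt{3n}}{4}=\tfrac{3\pi}{4}\sqrt{n/3}$ for every $n\geq 1$.

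Combining these pointwise bounds and integrating in $w$ yields $\int_{\mathcal{D}_+}|\text{integrand}|\,|dt|\leq e^{\frac{3\pi}{4}\sqrt{n/3}}\int_0^\infty(10+w)^{-\nu-1}e^{-(n+\frac{1}{24})w}\,dw$; doubling for $\mathcal{D}_-$ and absorbing the leftover $(2\pi)^{-1}$ from $(2\pi i)^{-1}$ into the stated constant $2$ (since $\tfrac{1}{\pi}\leq 2$) finishes the proof after multiplying by the prefactor $(\tfrac{2\pi^2}{24n+1})^{\nu/2}$. The main obstacle I anticipate is the careful verification of the two key pointwise inequalities—the algebraic comparison $|t|^2\leq(w+10)^2$ and the numerical bound on the exponent—both of which hinge on the explicit choice $\eta=\pi/\sqrt{12n}$ and must be checked uniformly in $n\geq 1$, although both hold with ample slack.
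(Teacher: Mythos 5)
Your proposal is correct and follows essentially the same route as the paper: rescale $t \mapsto (n+\tfrac{1}{24})t$, identify the difference with the $\mathcal D_\pm$ tails, and bound the integrand there pointwise by $(10+w)^{-\nu-1}\exp(\tfrac{3\pi}{4}\sqrt{n/3}-(n+\tfrac{1}{24})w)$ — the paper reaches the same bound via the slightly cruder estimates $|t|\leq 11\eta+u<10+u$ and $\mathrm{Re}(\tfrac{\pi^2}{12t})\leq\tfrac{\pi}{4}\sqrt{n/3}$, whereas you use the sharp maximum $\tfrac{\pi^2}{1212\eta}$ and the algebraic comparison $|t|^2\leq(w+10)^2$, but the outcome is identical. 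The one arithmetic slip is the term $\tfrac{\pi}{144\sqrt{3n}}$, which should be $\tfrac{\eta}{24}=\tfrac{\pi}{24\sqrt{12n}}=\tfrac{\pi}{48\sqrt{3n}}$; this is harmless since $\tfrac{1}{606}+\tfrac{1}{6}+\tfrac{1}{144n}<\tfrac{1}{4}$ still holds for all $n\geq 1$.
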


\begin{proof}
By a change of variables $t \mapsto \lp n + \frac{1}{24} \rp t$ and shifting of the path of integration back to $\mathcal D$, we see that
\begin{align*}
    I_\nu\lp \pi \sqrt{\dfrac{1}{3}\lp n + \dfrac{1}{24} \rp} \rp = \lp \dfrac{\pi^2}{12\lp n + \frac{1}{24} \rp} \rp^{\frac{\nu}{2}} \dfrac{1}{2\pi i} \int_{\mathcal D} t^{-\nu-1} \exp\lp \dfrac{\pi^2}{12t} + \lp n + \dfrac{1}{24} \rp t \rp dt.
\end{align*}
Thus, we have
\begin{align*}
    I_\nu\lp \pi \sqrt{\dfrac{1}{3}\lp n + \dfrac{1}{24} \rp} \rp - \widehat{I}_\nu(n) = \lp \dfrac{2\pi^2}{24n+1} \rp^{\frac{\nu}{2}} \dfrac{1}{2\pi i} \int_{\mathcal D_+ \cup \mathcal D_-} t^{-\nu-1} \exp\lp \dfrac{\pi^2}{12t} + \lp n + \dfrac{1}{24} \rp t \rp dt.
\end{align*}
For $t \in \mathcal D_-$, we may set $t = \lp \eta - u \rp - 10 \eta i$. Since we have $\mathrm{Re}\lp \frac{\pi^2}{12t} \rp \leq \frac{\pi}{4} \sqrt{\frac{n}{3}}$ for all $u \geq 0$ and $|t| \leq |\eta - u| + |10 \eta i| < 11\eta + u = \frac{11\pi}{\sqrt{12n}} + u$, we have
\begin{align*}
    \left| t^{-\nu - 1} \exp\lp \dfrac{\pi^2}{12t} + nt \rp \right| &\leq |t|^{-\nu - 1} \exp\lp \dfrac{\pi}{4} \sqrt{\dfrac{n}{3}} + \lp n + \dfrac{1}{24} \rp \lp \eta - u \rp \rp \\ &\leq \lp \dfrac{11 \pi}{\sqrt{12n}} + u \rp^{-\nu - 1} \exp\lp \dfrac{3\pi}{4} \sqrt{\dfrac{n}{3}} - \lp n + \dfrac{1}{24} \rp u \rp,
\end{align*}
where the last inequality uses $-\nu - 1 \geq 0$. The same bound holds for $\mathcal D_+$. Since $\frac{11 \pi}{\sqrt{12n}} < 10$, we conclude that
\begin{align*}
    \left| I_\nu\lp \pi \sqrt{\dfrac{1}{3}\lp n + \dfrac{1}{24} \rp} \rp - \widehat{I}_\nu(n) \right| < 2 \lp \dfrac{2\pi^2}{24n+1} \rp^{\frac{\nu}{2}} \exp\lp \dfrac{3\pi}{4} \sqrt{\dfrac{n}{3}} \rp \int_0^\infty \lp 10 + u \rp^{-\nu - 1} e^{-\lp n + \frac{1}{24} \rp u} du.
\end{align*}
This completes the proof.
\end{proof}

\section{Generating Functions and Effective Euler--Maclaurin Asymptotics} \label{Effective Euler Maclaurin}

The first part of this section derives the generating function $\mathcal{D}_{r,t}(q)$ of $D_{r,t}(n)$. We then prove that this generating function has a direct connection to the Euler--Maclaurin framework. Following the discussion of $\mathcal D_{r,t}(q)$, we show how to make the error terms in Propositions \ref{Euler-Maclaurin Rapid Decay} and \ref{Euler-Maclaurin Sufficient Decay} effective.

\subsection{Generating Function for $D_{r,t}(n)$} \label{Generating Function, L, and Xi}

This section is dedicated to defining the generating function for $D_{r,t}(n)$ and an important factorization of this generating function. Define
\begin{align*}
    \mathcal{D}_{r,t}(q) := \sum_{n \geq 0} D_{r,t}(n) q^n.
\end{align*}
We also use the standard {\it $q$-Pochhammer symbol} $(a;q)_\infty$, which is defined by $$(a;q)_\infty := \prod_{n \geq 1} \lp 1 - a q^{n-1} \rp$$ for $|q| < 1$. Recall that $(-q;q)_\infty$ is the generating function for the number of partitions of $n$ into distinct parts, as each term $(1 + q^m)$ appearing in the product dictates whether a given partition has a part of size $m$. By a slight modification of this argument, we obtain $\mathcal{D}_{r,t}(q)$.

\begin{lemma} \label{Generating Function}
We have the generating function identity
$$\mathcal{D}_{r,t}(q) = (-q;q)_\infty \sum_{k \geq 0} \dfrac{q^{kt + r}}{1 + q^{kt + r}}.$$
\end{lemma}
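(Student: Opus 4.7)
The plan is to prove the identity by the standard parts-counting trick, writing $D_{r,t}(n)$ as a double sum that swaps the order of summation over partitions and over parts.

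First I would unpack the definition of $D_{r,t}(n)$ by switching the order of summation. Namely, writing the indicator for a part of size $m$ appearing in $\lambda$, I have
\begin{align*}
D_{r,t}(n) = \sum_{\substack{\lambda \vdash n \\ \lambda \in \mathcal{D}}} \sum_{j} \mathbbm{1}[\lambda_j \equiv r \pmod t] = \sum_{\substack{m \geq 1 \\ m \equiv r \pmod t}} \#\bigl\{ \lambda \vdash n : \lambda \in \mathcal{D},\ m \in \lambda \bigr\}.
\end{align*}
For a fixed $m$, removing the part $m$ from such a $\lambda$ gives a bijection between distinct partitions of $n$ containing $m$ and distinct partitions of $n-m$ in which $m$ does not appear as a part. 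The generating function for the latter is $\prod_{j \geq 1,\, j \neq m}(1+q^j) = (-q;q)_\infty/(1+q^m)$, so multiplication by $q^m$ gives the generating function for distinct partitions of $n$ containing $m$ as $q^m(-q;q)_\infty/(1+q^m)$.

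Then I would sum this over all $m \equiv r \pmod t$. Since $(-q;q)_\infty$ is independent of $m$, it factors out, and substituting $m = kt+r$ for $k \geq 0$ converts the sum to
\begin{align*}
\mathcal{D}_{r,t}(q) = \sum_{n \geq 0} D_{r,t}(n) q^n = (-q;q)_\infty \sum_{\substack{m \geq 1 \\ m \equiv r \pmod t}} \frac{q^m}{1+q^m} = (-q;q)_\infty \sum_{k \geq 0} \frac{q^{kt+r}}{1+q^{kt+r}},
\end{align*}
which is the claimed identity. (An equivalent derivation is to introduce a marking variable $z$ in the Euler product by replacing each factor $(1+q^{kt+r})$ with $(1+zq^{kt+r})$, differentiate at $z=1$, and use the logarithmic derivative $\partial_z \log(1+zq^m)|_{z=1} = q^m/(1+q^m)$; this produces the same formula.)

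There is no substantive obstacle here; the only points that require mild care are justifying the interchange of summation (which is fine since all coefficients are nonnegative and the series for $(-q;q)_\infty$ converges absolutely for $|q|<1$) and noting that the sum over $k$ converges in the same region because $q^{kt+r}/(1+q^{kt+r})$ is bounded by $|q|^{kt+r}/(1-|q|)$ for $|q|<1$. Thus the identity holds as an equality of formal power series in $q$ and as an equality of analytic functions on $|q|<1$.
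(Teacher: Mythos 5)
Your proposal is correct and follows essentially the same route as the paper: both identify $\frac{q^m}{1+q^m}(-q;q)_\infty$ as the generating function for distinct partitions containing $m$ as a part and then sum over $m \equiv r \pmod{t}$. Your version merely spells out the part-removal bijection and the convergence justification in more detail than the paper does.
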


\begin{proof}
By modifying Euler's generating function $\lp -q;q \rp_\infty$ for partitions into distinct parts, we see that $\frac{q^m}{1 + q^m} (-q;q)_\infty$ is the generating function for partitions into distinct parts which include $m$ as a part. Furthermore, since all parts are distinct, this is also the generating function for $D_{r,t}(n)$. Therefore, summing over $m$ equivalent to $r$ modulo $t$ yields
$$\mathcal{D}_{r,t}(q) = \sum_{\substack{m \geq 0 \\ m \equiv r \pmod{t}}} \dfrac{q^m (-q;q)_\infty}{1 + q^m} = (-q;q)_\infty \sum_{k \geq 0} \dfrac{q^{kt + r}}{1 + q^{kt + r}}.$$
This completes the proof.
\end{proof}

We conclude this section with a brief lemma regarding a natural decomposition of this generating function, which will be useful for computing asymptotics. Define the functions $\xi(q) := (-q;q)_\infty$ and $L_{r,t}(q) := \sum_{k \geq 0} \frac{q^{kt+r}}{1 + q^{kt+r}}$, so that $\mathcal{D}_{r,t}(q) = \xi(q) L_{r,t}(q)$. Additionally, define $B(z) := \frac{e^{-z}}{z\lp 1 - e^{-z} \rp}$ and $E(z) := \frac{e^{-z}}{1 + e^{-z}}$. This notation is assumed throughout the remainder of the paper. The importance of the functions $B(z)$ and $E(z)$ comes from the following series expansions connecting them to $\mathcal{D}_{r,t}(q)$, which we record now for convenience. Throughout the remainder of the paper, we let $\Log(z)$ denote the principal branch of the logarithm.

\begin{lemma} \label{Xi and L z-Expansions}
Let $\xi(q)$ and $L_{r,t}(q)$ be defined as above. Then, for $q = e^{-z}$ with $\mathrm{Re}(z) > 0$, we have
$$\Log\xi\lp e^{-z} \rp = z \lp \sum_{m \geq 0} B\lp \lp m + \frac 12 \rp 2z \rp - \sum_{m \geq 0} B\lp \lp m+1 \rp 2z \rp \rp$$
and
$$L_{r,t}\lp e^{-z} \rp = \sum_{k \geq 0} E\lp \lp k + \frac{r}{t} \rp tz \rp.$$
\end{lemma}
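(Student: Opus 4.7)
The second identity is essentially a tautology: substituting $q = e^{-z}$ into the definition of $L_{r,t}(q)$ gives
$$L_{r,t}(e^{-z}) = \sum_{k \ge 0} \frac{e^{-(kt+r)z}}{1+e^{-(kt+r)z}} = \sum_{k \ge 0} E\bigl((kt+r)z\bigr),$$
and writing $kt+r = (k+r/t)\,t$ inside the argument of $E$ yields the claim. I would dispatch this in one line.

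The first identity is a short computation that I would approach by expanding both sides into Lambert-series form and matching them term by term. For the right-hand side, I observe that $B(u) = \tfrac{1}{u(e^{u}-1)}$, so $z\,B(cz) = \tfrac{1}{c(e^{cz}-1)}$. Applying this with $c = 2m+1$ and $c = 2m+2$ converts the bracketed expression into
$$\sum_{m \ge 0}\frac{1}{(2m+1)(e^{(2m+1)z}-1)} - \sum_{m \ge 0}\frac{1}{(2m+2)(e^{(2m+2)z}-1)},$$
which, after recombining the odd and even indices, becomes the single series $\sum_{n \ge 1}\tfrac{(-1)^{n-1}}{n(e^{nz}-1)}$.

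For the left-hand side, I would use the logarithmic expansion of Euler's product. Writing $\Log \xi(e^{-z}) = \sum_{m \ge 1}\Log\bigl(1 + e^{-mz}\bigr)$ and applying the Taylor series $\Log(1+x) = \sum_{k \ge 1}\tfrac{(-1)^{k-1}}{k}x^{k}$, which is valid since $|e^{-mz}| < 1$ for $\mathrm{Re}(z) > 0$, gives the double sum $\sum_{m,k \ge 1}\tfrac{(-1)^{k-1}}{k}e^{-mkz}$. Absolute convergence (bounded by a geometric series in $m$ for each fixed $k$, with total mass $\sum_k k^{-1}(e^{k\mathrm{Re}(z)}-1)^{-1}$) justifies swapping the order of summation, and summing the inner geometric series in $m$ produces
$$\sum_{k \ge 1} \frac{(-1)^{k-1}}{k}\cdot\frac{e^{-kz}}{1-e^{-kz}} = \sum_{k \ge 1}\frac{(-1)^{k-1}}{k(e^{kz}-1)},$$
matching the right-hand side exactly.

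There is no real obstacle: the only subtlety is justifying the interchange of summation, which is routine for $\mathrm{Re}(z) > 0$. An equivalent alternative route, if desired, would be to use $(-q;q)_\infty = (q^2;q^2)_\infty/(q;q)_\infty$ together with the standard expansion $-\Log(a;a)_\infty = \sum_{k \ge 1}\tfrac{1}{k(a^{-k}-1)^{-1}}$ applied with $a = q$ and $a = q^2$, and then splitting the resulting sum into odd and even indices; this recovers the same Lambert series.
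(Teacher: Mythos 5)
Your proof is correct and follows essentially the same route as the paper: expand $\Log\xi(e^{-z})=\sum_{n\ge1}\Log(1+e^{-nz})$ into the Lambert series $\sum_{k\ge1}\frac{(-1)^{k-1}}{k(e^{kz}-1)}$ and match it against the odd/even split of the $B$-sums, with the second identity being a direct substitution. (Only a cosmetic slip in your optional aside: the standard expansion should read $-\Log(a;a)_\infty=\sum_{k\ge1}\frac{1}{k(a^{-k}-1)}$, without the extra inverse.)
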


\begin{proof}
Expanding $\Log \xi(q)$ as a Taylor series, we have
\begin{align*}
    \Log \xi(q) = \sum_{n \geq 1} \Log\lp 1 + q^n \rp = - z \sum_{m \geq 1} \dfrac{(-1)^m q^m}{mz \lp 1 - q^m \rp}.
\end{align*}

For $q = e^{-z}$, it follows from the definition of $B(z)$ that
\begin{align*}
    \Log \xi\lp e^{-z} \rp = z \lp \sum_{m \geq 0} B\lp \lp m + \frac 12 \rp 2z \rp - \sum_{m \geq 0} B\lp \lp m+1 \rp 2z \rp \rp.
\end{align*}
This proves the first part of the lemma. The second is an analogous calculation with $E(z)$ in place of $B(z)$, i.e.
\begin{align*}
    L_{r,t}\lp e^{-z} \rp = \sum_{k \geq 0} \dfrac{e^{-(kt + r)z}}{1 + e^{-(kt+r)z}} = \sum_{k \geq 0} E\lp \lp k + \frac{r}{t} \rp tz \rp.
\end{align*}
This completes the proof.
\end{proof}

We also record the Taylor expansions of $B(z)$ and $E(z)$ for later use. From the fact that $\frac{z}{e^z \pm 1} = \frac{z e^{-z}}{1 \pm e^{-z}}$ the generating function for the Bernoulli numbers $B_n$ is given by $B(z) = \frac{1}{z^2} - \frac{1}{2z} + \sum_{n \geq 0} \frac{B_{n+2}}{(n+2)!} z^n$, and similarly $E(z) = \sum_{n \geq 0} \frac{e_n}{n!} z^n$, where $e_n := \frac{E_n(0)}{2}$. We note for later that by \eqref{E_n(0) Equation},
\begin{align} \label{e_n Evaluation}
    e_n = \dfrac{1 - 2^{n+1}}{n+1} B_{n+1}.
\end{align}

\subsection{Effective Estimates}

In the proof of Theorem \ref{Effective MAIN}, we require the error terms in Proposition \ref{Euler-Maclaurin Rapid Decay} to be made effective. This is achieved by simply keeping track of the higher degree terms that were dropped in the proof of Proposition \ref{Euler-Maclaurin Rapid Decay}. In our applications, we will only require explicit bounds for the functions $E(z)$ and $B(z)$, functions which satisfy the conditions of Propositions \ref{Euler-Maclaurin Rapid Decay} and \ref{Euler-Maclaurin Sufficient Decay}, respectively. These two propositions essentially follow from ``erasing" higher-order terms in Lemma \ref{Euler-Maclaurin Exact}. Therefore, making the error terms in these results effective is essentially a matter of bookkeeping. These effective error terms become the central tool for implementing an effective version of Wright's circle method.

\begin{prop} \label{Euler-Maclaurin Rapid Decay Effective}
Let $f(z)$ be $C^\infty$ in $D_\delta$ with power series expansion $f(z) = \sum_{n \geq 0} c_n z^n$ that converges absolutely in the region $0 \leq |z| < R$ for some positive constant $R$, and let $f(z)$ and all its derivatives have sufficient decay as $z \to \infty$ in $D_\delta$. Then for any real number $0 < a \leq 1$ and any integer $N > 0$,
\begin{align*}
    \left| \sum_{m \geq 0} f\lp (m+a)z \rp - \dfrac{I_f}{z} + \sum_{n = 0}^{N-1} c_n \dfrac{B_{n+1}(a)}{n+1} z^n \right| \leq \dfrac{M_{N+1} J_{f,N+1}(z)}{(N+1)!} |z|^N + \sum_{k \geq N} |c_k| \lp 1 + \dfrac{k!}{10 (k-N)!} \rp |z|^k,
\end{align*}
where $M_{N+1} := \max\limits_{0 \leq x \leq 1} \left| B_{N+1}(x) \right|$ and
\begin{align*}
    J_{f,N+1}(z) := \int_0^\infty \left| f^{(N+1)}\lp w \rp \right| |dw|,
\end{align*}
where the path of integration proceeds along the line through the origin and $z$.
\end{prop}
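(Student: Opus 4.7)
The plan is to begin with the exact Euler--Maclaurin identity in Lemma~\ref{Euler-Maclaurin Exact}, applied with truncation index $N+1$ in place of $N$, bound the resulting integral remainder to produce the first term in the claimed inequality, and then expand the remaining finite sum as a Taylor series in $z$. The contributions of degree less than $N$ will cancel the asserted main term $-\sum_{n=0}^{N-1}c_n\frac{B_{n+1}(a)}{n+1}z^n$ via a Bernoulli polynomial identity, and what remains will be an explicit Taylor tail from degree $N$ onward.

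First I would bound the remainder $R_{N+1}(z):=(-z)^{N+1}\int_0^\infty f^{(N+1)}((x+a)z)\widehat B_{N+1}(x)/(N+1)!\,dx$ produced by the lemma. Using the pointwise bound $|\widehat B_{N+1}(x)|\leq M_{N+1}$ and substituting $w=(x+a)z$ along the ray from $0$ through $z$ (so that $|dw|=|z|\,dx$), one obtains
\[
|R_{N+1}(z)|\leq \frac{M_{N+1}|z|^{N+1}}{(N+1)!}\cdot\frac{1}{|z|}\int_{a|z|}^{\infty}|f^{(N+1)}(w)|\,|dw|\leq \frac{M_{N+1}J_{f,N+1}(z)}{(N+1)!}|z|^{N},
\]
after enlarging the lower endpoint of integration from $a|z|$ down to $0$; this matches the first term of the claimed bound.

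Next I would substitute the Taylor series $f(x)=\sum_{k\geq 0}c_k x^k$ into the remaining pieces of the identity, giving $\frac{1}{z}\int_{az}^\infty f(x)\,dx=\frac{I_f}{z}-\sum_{k\geq 0}\frac{c_k a^{k+1}}{k+1}z^k$ and $f^{(n)}(az)z^n=\sum_{k\geq n}c_k\binom{k}{n}n!\,a^{k-n}z^k$. Rearranging the resulting double sum yields a formal series $\sum_{k\geq 0}c_k P_k(a)z^k$, and the standard convolution identity $\frac{B_{k+1}(a)}{k+1}=\frac{a^{k+1}}{k+1}+\sum_{n=0}^{k}\frac{B_{n+1}}{n+1}\binom{k}{n}a^{k-n}$ (together with the vanishing of $B_{n+1}$ for every odd $n+1>1$, which neutralizes the $(-1)^n$ factor in Lemma~\ref{Euler-Maclaurin Exact} wherever the corresponding Bernoulli number is nonzero) forces $P_k(a)=0$ for all $k<N$, cancelling the asserted main term.

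What survives is the tail $\sum_{k\geq N}c_k P_k(a)z^k$ with
$P_k(a):=-\frac{a^{k+1}}{k+1}+\sum_{n=0}^{N}\frac{(-1)^n B_{n+1}}{n+1}\binom{k}{n}a^{k-n}.$
For $0<a\leq 1$ and $k\geq N\geq 1$, the $-a^{k+1}/(k+1)$ contribution together with the $n=0$ term is at most $\frac{1}{k+1}+\frac12\leq 1$, and for $1\leq n\leq N$ Lehmer's inequality \eqref{Bernoulli Inequality} combined with the monotonicity $\frac{k!}{(k-n)!}\leq \frac{k!}{(k-N)!}$ yields
\[
\sum_{n=1}^{N}\frac{|B_{n+1}|}{(n+1)!}\cdot\frac{k!}{(k-n)!}\leq \frac{k!}{(k-N)!}\cdot 2\sum_{m\geq 2}\frac{\zeta(m)}{(2\pi)^m}<\frac{k!}{10(k-N)!}.
\]
Hence $|P_k(a)|\leq 1+\frac{k!}{10(k-N)!}$, and summing against $|c_k||z|^k$ produces the desired tail. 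The hard part will be the sign-tracking in the third step---reconciling the $(-1)^n$ factor in Lemma~\ref{Euler-Maclaurin Exact} with the standard Bernoulli convolution identity to guarantee the cancellation for $k<N$---together with the elementary numerical verification that $2\sum_{m\geq 2}\zeta(m)/(2\pi)^m<\frac{1}{10}$; everything else reduces to mechanical bookkeeping.
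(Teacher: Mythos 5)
Your overall strategy coincides with the paper's: apply Lemma \ref{Euler-Maclaurin Exact} at order $N+1$, bound the integral remainder by $\frac{M_{N+1}J_{f,N+1}(z)}{(N+1)!}|z|^N$ exactly as you do, Taylor-expand the remaining terms about $0$ (legitimate since $|az|<R$), observe that the coefficients of $z^k$ for $k<N$ cancel, and estimate the surviving tail coefficients by $1+\frac{k!}{10(k-N)!}$ via Lehmer's inequality. Your remainder bound and your tail estimate (including the numerical check $2\sum_{m\ge 2}\zeta(m)(2\pi)^{-m}<\tfrac1{10}$) are correct and essentially identical to the paper's.

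The one place you diverge is the justification of the cancellation for $k<N$, and the justification you give there is wrong. You claim the $(-1)^n$ in Lemma \ref{Euler-Maclaurin Exact} is harmless because ``$B_{n+1}$ vanishes for every odd $n+1>1$''; but that vanishing occurs for \emph{even} $n\ge 2$, so the terms that survive are $n=0$ (where $B_1=-\tfrac12\neq 0$) and $n$ \emph{odd} (where $(-1)^n=-1$). Thus on every surviving term with $n\ge 1$ the sign is flipped relative to the convolution identity $\frac{B_{k+1}(a)}{k+1}=\frac{a^{k+1}}{k+1}+\sum_{n=0}^{k}\frac{B_{n+1}}{n+1}\binom{k}{n}a^{k-n}$, and the $n=0$ term does not match it either; if you push your computation through literally, the degree-$<N$ coefficients do not cancel but leave a residue proportional to $f(az)$ (this traces back to the $B_1(0)$-versus-$B_1(1)$ normalization hidden in the exact summation formula). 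The paper does not attempt this direct verification at all: it notes that $S_N(z)=O_N(z^N)$ is already guaranteed by Proposition \ref{Euler-Maclaurin Rapid Decay}, so whatever the low-order coefficients are, they must vanish, and only the $k\ge N$ tail needs to be bounded. I recommend you adopt that soft argument --- it is the cleanest fix, and the final inequality is unaffected because the tail bound only uses $|B_{n+1}|$. If you insist on a direct verification, you must use $(-1)^nB_{n+1}=-B_{n+1}$ for odd $n$ together with a careful treatment of the $n=0$ term, which is exactly the sign-tracking your parenthetical gets backwards.
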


\begin{proof}
From Proposition \ref{Euler-Maclaurin Rapid Decay}, we already know that
\begin{align*}
    S_N(z) := \sum_{m \geq 0} f\lp (m+a)z \rp - \dfrac{I_f}{z} + \sum_{n=0}^{N-1} c_n \dfrac{B_{n+1}(a)}{n+1} z^n = O_N\lp z^N \rp.
\end{align*}
It suffices to make this upper bound effective. We use the shorthand $$J_{N+1,a}(z) := \int_{az}^\infty f^{(N+1)}\lp w \rp \dfrac{\widehat{B}_{N+1}\lp \frac wz - a \rp}{(N+1)!} dw,$$
which is the integral from last term of Lemma \ref{Euler-Maclaurin Exact} with a substitution $w = \lp x+a \rp z$. By Lemma \ref{Euler-Maclaurin Exact}, we may write
\begin{align*}
    S_N(z) = \dfrac{-1}{z} \int_0^{az} f(x) dx + \sum_{n=0}^{N} \dfrac{(-1)^n B_{n+1}}{(n+1)!} f^{(n)}(az) z^n + \sum_{n=0}^{N} c_n \dfrac{B_{n+1}(a)}{n+1} z^n - (-z)^N J_{N+1,a}(z).
\end{align*}
Because $0 < a \leq 1$ and $0 < |z| < R$, we have $|az| < R$ and so we may expand $f(x)$ and its derivatives as power series for $0 \leq x \leq |az|$. Using these power series representations and the absolute convergence of $\int_0^{az} f(x) dx$, we have
\begin{align*}
    S_N(z) = - \sum_{k \geq 0} \dfrac{c_k}{k+1} a^{k+1} z^k &+ \sum_{n=0}^{N} \dfrac{(-1)^n B_{n+1}}{(n+1)!} \sum_{k \geq 0} \dfrac{(k+n)!}{k!} c_{n+k} a^k z^{n+k} \\ &+ \sum_{n=0}^{N} c_n \dfrac{B_{n+1}(a)}{n+1} z^n - (-z)^N J_{N+1,a}(z).
\end{align*}
It is already known, for instance by Proposition \ref{Euler-Maclaurin Rapid Decay}, that $S_N(z) = O(z^N)$, so the lower-order terms in the above identity necessarily cancel. Thus, we have
\begin{align*}
    S_N(z) &= - \sum_{k \geq N} \dfrac{c_k}{k+1} a^{k+1} z^k + \sum_{n=0}^{N} \dfrac{(-1)^n B_{n+1}}{(n+1)!} \sum_{k \geq N-n} c_{n+k} \dfrac{(n+k)!}{k!} a^k z^{n+k} - (-z)^N J_{N+1,a}(z).
\end{align*}
By taking $k \mapsto k-n$ in the second term and rearranging, we obtain
\begin{align*}
    S_N(z) &= \sum_{k \geq N} c_k \left[- \dfrac{a^{k+1}}{k+1}  + \sum_{n=0}^N \dfrac{1}{n+1}\left[ (-1)^n B_{n+1} \binom{k}{n} a^{k-n} \right] \right] z^k - \lp -z \rp^N J_{N+1,a}(z).
\end{align*}

We now bound the remaining terms. The integral $J_{N+1,a}(z)$ is bounded trivially by
\begin{align*}
    \left| J_{N+1,a}(z) \right| \leq \dfrac{M_{N+1}}{(N+1)!} J_{f,N+1}(z) = O_N(1)
\end{align*}
since $f(z)$ is bounded near zero and has sufficient decay as $z \to \infty$ in $D_\delta$.

We also have, using Lehmer's bound \eqref{Bernoulli Inequality} and elementary estimates that for $k \geq N$,
\begin{align*}
    \left| - \dfrac{a^{k+1}}{k+1} + \sum_{n=0}^N \dfrac{1}{n+1} \left[ (-1)^n B_{n+1} \binom{k}{n} a^{k-n} \right] \right| &\leq \dfrac{a^{k+1}}{k+1} + \dfrac{a^k}{2} + \sum_{\substack{n=1 \\ n \text{ odd}}}^N \dfrac{2 \zeta(n+1) n!}{(2\pi)^{n+1}} \binom{k}{n} a^{k-n} \\ &< \dfrac{1}{k+1} + \dfrac{1}{2} + \dfrac{\pi}{6} \sum_{\substack{n=1 \\ n \text{ odd}}}^N \dfrac{k!}{(2 \pi)^n (k-n)!}.
\end{align*}
Since $1 \leq n \leq N \leq k$, $\frac{k!}{(k-n)!} < \frac{k!}{(k-N)!}$, and $\frac{\pi}{6} \sum_{n \geq 0} \frac{1}{(2\pi)^{2n+1}} < \frac{1}{10}$,
\begin{align*}
    \left| - \dfrac{a^{k+1}}{k+1} + \sum_{n=0}^N \dfrac{1}{n+1} \left[ (-1)^n B_{n+1} \binom{k}{n} a^{k-n} \right] \right| < 1 + \dfrac{k!}{10 (k-N)!}.
\end{align*}
Thus,
\begin{align*}
    \left| \sum_{k \geq N} c_k \left[ - \dfrac{a^{k+1}}{k+1} + \sum_{n=0}^N \dfrac{1}{n+1} \left[ (-1)^n B_{n+1} \binom{k}{n} a^{k-n} \right] \right] z^k \right| \leq \sum_{k \geq N} |c_k| \lp 1 + \dfrac{k!}{10 (k-N)!} \rp |z|^k.
\end{align*}
Combining all bounds completes the proof.
\end{proof}

The proposition above shows how Euler--Maclaurin summation can be used to derive effective asymptotics for certain infinite series involving a function $f(z)$ with rapid decay at infinity. In analogy with Proposition \ref{Euler-Maclaurin Sufficient Decay}, we now show how to derive explicit bounds for the case of sufficient decay at infinity.

\begin{prop} \label{Euler-Maclaurin General Effective}
Let $f(z)$ be $C^\infty$ in $D_\delta$ with Laurent series $f(z) = \sum_{n = n_0}^\infty c_n z^n$ that converges absolutely in the region $0 < |z| < R$ for some positive constant $R$. Suppose $f(z)$ and all its derivatives have sufficient decay as $z \to \infty$ in $D_\delta$. Then for any real numbers $0 < a \leq 1$, $A > 0$ and any integer $N > 0$, we have
\begin{align*}
    \bigg| \sum_{m \geq 0} f\lp (m+a)z \rp - \sum_{n = n_0}^{-2} c_n \zeta(-n,a) z^n &- \dfrac{I_{f,A}^*}{z} + \dfrac{c_{-1}}{z} \lp \Log\lp Az \rp + \gamma + \psi\lp a \rp \rp + \sum_{n \geq 0} c_n^* \dfrac{B_{n+1}(a)}{n+1} z^n \bigg| \\ &\leq \dfrac{M_{N+1} J_{g,N+1}(az)}{(N+1)!} |z|^N + \sum_{k \geq N} |b_k| \lp 1 + \dfrac{k!}{10 (k-N)!} \rp |z|^k,
\end{align*}
where $g(z) := f(z) - \frac{c_{-1} e^{-Az}}{z} - \sum_{n = n_0}^{-2} c_n z^n$, $b_n := c_n - \frac{(-A)^{n+1} c_{-1}}{(n+1)!}$, $M_N$ and $J_{g,N}$ are defined as in Lemma \ref{Euler-Maclaurin Rapid Decay Effective}, and
\begin{align*}
    c_n^* := \begin{cases} c_n & \text{if } n \leq N-1, \\ \dfrac{(-A)^{n+1} c_{-1}}{(n+1)!} & \text{if } n \geq N. \end{cases}
\end{align*}
\end{prop}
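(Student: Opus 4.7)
The plan is to reduce the statement to the rapid-decay case (Proposition \ref{Euler-Maclaurin Rapid Decay Effective}) by peeling off the singular part of $f$. Following the definition of $g$ in the statement, I would write $f(z) = g(z) + c_{-1} h(z) + p(z)$ where $h(z) := e^{-Az}/z$ and $p(z) := \sum_{n=n_0}^{-2} c_n z^n$. First I would verify that the $z^{-1}$ contribution of $c_{-1} h$ cancels the pole of $f$, so that $g$ is analytic at the origin with Taylor coefficients $b_n = c_n - \frac{(-A)^{n+1} c_{-1}}{(n+1)!}$; since each of $f$, $c_{-1} h$, and $p$ has sufficient decay at infinity along with all of its derivatives, the same is true of $g$, and thus $g$ satisfies the hypotheses of Proposition \ref{Euler-Maclaurin Rapid Decay Effective}.

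Splitting the sum accordingly,
\[
\sum_{m \geq 0} f\lp (m+a)z \rp = \sum_{m \geq 0} g\lp (m+a)z \rp + c_{-1} \sum_{m \geq 0} h\lp (m+a)z \rp + \sum_{n=n_0}^{-2} c_n \zeta(-n,a) z^n,
\]
where the last sum is evaluated exactly via the identity $\sum_{m \geq 0}(m+a)^n = \zeta(-n,a)$ for $n \leq -2$. Applying Proposition \ref{Euler-Maclaurin Rapid Decay Effective} to $g$ with the same parameters $a$ and $N$ yields
\[
\left| \sum_{m \geq 0} g\lp (m+a)z \rp - \frac{I_g}{z} + \sum_{n=0}^{N-1} b_n \frac{B_{n+1}(a)}{n+1} z^n \right| \leq \frac{M_{N+1} J_{g,N+1}(az)}{(N+1)!} |z|^N + \sum_{k \geq N} |b_k|\lp 1 + \frac{k!}{10(k-N)!} \rp |z|^k,
\]
which is precisely the error term appearing in the proposition, and a direct computation identifies $I_g$ with $I_{f,A}^*$.

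The hard step will be to establish the exact identity
\[
\sum_{m \geq 0} h\lp (m+a)z \rp = -\frac{1}{z}\lp \Log(Az) + \gamma + \psi(a) \rp - \sum_{n \geq 0} \frac{(-A)^{n+1}}{(n+1)!} \frac{B_{n+1}(a)}{n+1} z^n
\]
valid on a neighbourhood of $z = 0$. I would first write $h\lp (m+a)z \rp = \int_A^\infty e^{-(m+a)zt}\, dt$, interchange sum and integral, and substitute $u = zt$ to obtain $\sum_m h\lp (m+a)z \rp = \frac{1}{z} \int_{Az}^\infty \frac{e^{-au}}{1-e^{-u}}\, du$. Then I would isolate the logarithmic divergence via $\int_{Az}^\infty \frac{e^{-u}}{1-e^{-u}}\, du = -\Log(1 - e^{-Az})$, and invoke the classical digamma integral representation $\psi(a) + \gamma = \int_0^\infty \frac{e^{-u} - e^{-au}}{1-e^{-u}}\, du$ to produce the constant $-\frac{1}{z}(\psi(a) + \gamma)$. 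The remaining analytic remainder has a Taylor expansion on $|Az| < 2\pi$ whose coefficients can be extracted from the generating-function identity $\frac{u e^{-au}}{1-e^{-u}} = \sum_{n \geq 0} (-1)^n B_n(a) \frac{u^n}{n!}$ (which follows from \eqref{Bernoulli Polynomial Definition} together with $B_n(1-a) = (-1)^n B_n(a)$); absolute convergence of the displayed series is guaranteed by Lehmer's bound \eqref{Bernoulli Inequality}.

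Finally, assembling the three pieces and collecting like powers of $z$ completes the argument. Because $b_n + \frac{(-A)^{n+1} c_{-1}}{(n+1)!} = c_n$ for $0 \leq n \leq N-1$, the contributions of degree $0 \leq n \leq N-1$ coming from the $g$-expansion and from the first $N$ terms of the $h$-expansion combine to $\sum_{n=0}^{N-1} c_n \frac{B_{n+1}(a)}{n+1} z^n$, while the tail $n \geq N$ of the $h$-expansion supplies $\sum_{n \geq N} \frac{(-A)^{n+1} c_{-1}}{(n+1)!} \frac{B_{n+1}(a)}{n+1} z^n$. Together these two sums produce exactly $\sum_{n \geq 0} c_n^* \frac{B_{n+1}(a)}{n+1} z^n$, so the quantity left inside the absolute value is the error from Proposition \ref{Euler-Maclaurin Rapid Decay Effective} applied to $g$, which is the stated bound.
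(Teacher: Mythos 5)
Your proposal is correct and follows essentially the same route as the paper: decompose $f$ as $g$ plus the principal part and the term $c_{-1}e^{-Az}/z$, apply Proposition \ref{Euler-Maclaurin Rapid Decay Effective} to $g$ (noting $I_g = I_{f,A}^*$), and absorb the $e^{-Az}/z$ contribution via the exact identity for $\sum_{m\ge 0} e^{-A(m+a)z}/((m+a)z)$, with the final bookkeeping producing the coefficients $c_n^*$. The only difference is that the paper simply cites this last identity from \cite[Equation 5.10]{BJM}, whereas you sketch a direct derivation of it via Gauss's digamma integral and the Bernoulli generating function; both are fine.
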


\begin{proof}
Since
$$f(z) = g(z) + \dfrac{c_{-1} e^{-Az}}{z} + \sum_{n = n_0}^{-2} c_n z^n,$$
then $g(z)$ is holomorphic at $z=0$ and has sufficient decay at infinity. Because $f(z)$ has a Laurent series converging for $0 < |z| < R$, it follows that $g(z)$ has a Taylor series $g(z) = \sum_{n = 0}^\infty b_n z^n$ which converges for $|z| < R$. Also note that $I_g = I^*_{f,A}$ by definition. Therefore, Proposition \ref{Euler-Maclaurin Rapid Decay Effective} implies for $N > 0$ that
\begin{align*}
    \left| \sum_{m \geq 0} g\lp (m+a)z \rp - \dfrac{I_{f,A}^*}{z} + \sum_{n = 0}^{N-1} b_n \dfrac{B_{n+1}(a)}{n+1} z^n \right| \leq \dfrac{M_{N+1} J_{g,N+1}(z)}{(N+1)!} |z|^N + \sum_{k \geq N} |b_k| \lp 1 + \dfrac{k!}{10 (k-N)!} \rp |z|^k
\end{align*}
for $z \in D_\delta$ with $0 < |z| < R$. From the definition of $g(z)$ this becomes
\begin{align*}
    \Bigg| \sum_{m \geq 0} \left[ f\lp (m+a)z \rp - \dfrac{c_{-1} e^{-A(m+a)z}}{(m+a)z} \right] &- \sum_{n = n_0}^{-2} c_n \zeta(-n,a) z^n - \dfrac{I_{f,A}^*}{z} + \sum_{n = 0}^{N-1} b_n \dfrac{B_{n+1}(a)}{n+1} z^n \Bigg| \\ &\leq\dfrac{M_{N+1} J_{g,N+1}(z)}{(N+1)!} |z|^N + \sum_{k \geq N} |b_k| \lp 1 + \dfrac{k!}{10 (k-N)!} \rp |z|^k.
\end{align*}
By the definition of $b_n$ we have
\begin{align*}
    \sum_{n = 0}^{N-1} b_n \dfrac{B_{n+1}(a)}{n+1} z^n = \sum_{n = 0}^{N-1} c_n \dfrac{B_{n+1}(a)}{n+1} z^n - \sum_{n = 0}^{N-1} \dfrac{(-A)^{n+1} c_{-1}}{(n+1)!} \dfrac{B_{n+1}(a)}{n+1} z^n,
\end{align*}
and if we adopt the notation
\begin{align*}
    \dfrac{c_{-1}}{z} H_{a,N}(z) := \dfrac{c_{-1}}{z} \lp \sum_{m \geq 0} \dfrac{e^{-A(m+a)z}}{m+a} + \sum_{n=0}^{N-1} \dfrac{B_{n+1}(a)}{(n+1) (n+1)!} (-Az)^{n+1} \rp,
\end{align*}
it follows that
\begin{align*}
    \bigg| \sum_{m \geq 0} f\lp (m+a)z \rp - \sum_{n = n_0}^{-2} c_n \zeta(-n,a) z^n -& \dfrac{c_{-1}}{z} H_{a,N}(Az) - \dfrac{I_{f,A}^*}{z} + \sum_{n = 0}^{N-1} c_n \dfrac{B_{n+1}(a)}{n+1} z^n \bigg| \\ &\leq \dfrac{M_{N+1} J_{g,N+1}(z)}{(N+1)!} |z|^N + \sum_{k \geq N} |b_k| \lp 1 + \dfrac{k!}{10 (k-N)!} \rp |z|^k.
\end{align*}
By \cite[Equation 5.10]{BJM}, it is known that
\begin{align*}
    H_a(z) := \sum_{n \geq 0} \dfrac{e^{-(m+a)z}}{m+a} + \sum_{n \geq 0} \dfrac{B_{n+1}(a)}{(n+1) (n+1)!} (-z)^{n+1}
\end{align*}
satisfies $H_a(Az) = - \Log(Az) - \gamma - \psi(a)$ for any $A > 0$. Since $$H_{a,N}(Az) = H_a(Az) - \sum_{n \geq N} \frac{B_{n+1}(a)}{(n+1) (n+1)!} (-Az)^{n+1},$$
this completes the proof.
\end{proof}

\section{Estimates for $L_{r,t}(q)$ and $\xi(q)$} \label{Applied Estimates}

In this section, we prove effective bounds for the functions $L_{r,t}(q)$ and $\xi(q)$ on both the major and minor arcs. The first subsection covers major arc bounds, and the second covers minor arc bounds.

\subsection{Major arc effective bounds}

In this subsection, we compute effective bounds on the functions $L_{r,t}(q)$ and $\xi(q)$ on the major arc. We also note that in the region $0 \leq |y| < 10\eta$, the hypothesis $\eta < \frac{\pi}{40t}$ always implies $|z| < \frac{\sqrt{101} \pi}{80} < \frac{2}{5}$.

\begin{lemma} \label{L Major Arc}
Let $t \geq 2$ and $0 < r \leq t$ be integers and $z = \eta + iy$ a complex number satisfying $0 \leq |y| < 10\eta$ and $\eta < \frac{\pi}{40t}$. Then
\begin{align*}
     \left| L_{r,t}\lp e^{-z} \rp - \dfrac{\log(2)}{tz} + \dfrac{1}{2} B_1\lp \dfrac{r}{t} \rp - \dfrac{t}{8} B_2\lp\frac{r}{t}\rp z + \dfrac{t^3}{192} B_4\lp \frac rt \rp z^3 \right| < \dfrac{1}{20} t^5 |z|^5.
\end{align*}
\end{lemma}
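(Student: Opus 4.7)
The plan is to apply the effective Euler--Maclaurin formula (Proposition~\ref{Euler-Maclaurin Rapid Decay Effective}) to $f = E$ with shift $a = r/t$ and argument scaled by $tz$, exploiting the identity from Lemma~\ref{Xi and L z-Expansions}:
$$L_{r,t}(e^{-z}) \;=\; \sum_{k \geq 0} E\bigl((k + r/t)\,tz\bigr).$$
Writing $E(w) = 1/(e^w + 1)$, this function is holomorphic on any $D_\delta$ (its nearest singularities sit at $w = \pm i\pi$), its derivatives decay exponentially as $\mathrm{Re}(w) \to \infty$, and its Taylor series $E(w) = \sum_{n \geq 0} (e_n/n!) w^n$ converges for $|w| < \pi$. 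The hypotheses $|y| < 10\eta$ and $\eta < \pi/(40t)$ force $|tz| \leq t\sqrt{101}\,\eta < \sqrt{101}\,\pi/40 < 0.79$, safely inside the disk of convergence, so all conditions of Proposition~\ref{Euler-Maclaurin Rapid Decay Effective} are satisfied at $tz$.

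Taking $N = 5$ in Proposition~\ref{Euler-Maclaurin Rapid Decay Effective} and using $I_E = \int_0^\infty du/(e^u + 1) = \log 2$, the expansion reads
$$L_{r,t}(e^{-z}) \;=\; \frac{\log 2}{tz} \;-\; \sum_{n=0}^{4} \frac{e_n}{n!}\cdot\frac{B_{n+1}(r/t)}{n+1}\,(tz)^n \;+\; \mathrm{Err}.$$
Identity~\eqref{e_n Evaluation} together with $B_3 = B_5 = 0$ yields $e_0 = \tfrac{1}{2}$, $e_1 = -\tfrac{1}{4}$, $e_3 = \tfrac{1}{8}$, and $e_2 = e_4 = 0$, so the inner sum collapses to precisely $\tfrac{1}{2} B_1(r/t) - \tfrac{t}{8} B_2(r/t)\,z + \tfrac{t^3}{192} B_4(r/t)\,z^3$, matching the combination appearing in the statement.

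It remains to bound the effective error
$$\mathrm{Err} \;\leq\; \frac{M_6\,J_{E,6}(tz)}{720}\,|tz|^5 \;+\; \sum_{k \geq 5} \frac{|e_k|}{k!}\!\left(1 + \frac{k!}{10(k-5)!}\right)|tz|^k$$
by $\tfrac{1}{20}|tz|^5 = \tfrac{t^5}{20}|z|^5$. Here $M_6 = |B_6| = 1/42$ follows from Lehmer's inequality~\eqref{Bernoulli Inequality} being tight at even indices. The integral $J_{E,6}(tz) = \int_0^\infty |E^{(6)}(u)|\,|du|$ along the ray through $0$ and $tz$ is bounded by an explicit constant using that this ray lies in the sector $|\arg w| < \arctan 10 < \pi/2$, where $E^{(6)}$ decays exponentially. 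For the Taylor tail, Lehmer's inequality applied through $e_k = (1 - 2^{k+1}) B_{k+1}/(k+1)$ gives $|e_k|/k! = O(\pi^{-k})$, and since $|tz|/\pi < \sqrt{101}/40 \approx 1/4$, the tail is dominated by a rapidly convergent geometric series and contributes $O(|tz|^5)$ with a small implied constant. The principal obstacle is \emph{numerical}: extracting the clean constant $1/20$ requires both pieces of $\mathrm{Err}$ to be squeezed simultaneously, which is exactly what the strong hypothesis $\eta < \pi/(40t)$ accommodates by forcing $|tz|$ well below $\pi$.
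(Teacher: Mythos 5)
Your proposal follows the paper's proof essentially verbatim: apply Proposition~\ref{Euler-Maclaurin Rapid Decay Effective} to $E$ at $a=r/t$ with $N=5$, use $I_E=\log 2$ and \eqref{e_n Evaluation} to identify $e_0=\tfrac12$, $e_1=-\tfrac14$, $e_2=e_4=0$, $e_3=\tfrac18$ (which gives exactly the stated polynomial), and then bound $J_{E,6}$ and the Taylor tail numerically. The one step you wave at rather than execute --- the final squeeze of the error below $\tfrac{1}{20}|tz|^5$ --- is in fact the delicate point: the paper's own computation ($J_{E,6}<755$ together with a tail contribution of $\tfrac14|tz|^5$) only yields the weaker constant $\tfrac{7}{25}t^5|z|^5$, so the constant $\tfrac{1}{20}$ in the statement is not actually attained by this argument as the paper writes it, and you should not assert it without a sharper tail estimate.
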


\begin{proof}
The proof relies on an application of Proposition \ref{Euler-Maclaurin Rapid Decay Effective} to $E(z) = \sum_{n=0}^\infty \frac{e_n}{n!} z^n$, whose radius of convergence is $\pi$. We note $M_6 = \frac{1}{42}$. Thus, applying Proposition \ref{Euler-Maclaurin Rapid Decay Effective} to $E(z) = \sum_{k \geq 0} \frac{e_k}{k!} z^k$ with $a = \frac rt$, we obtain
\begin{align*}
	\bigg| \sum_{k \geq 0} E\lp \lp k + \frac{r}{t} \rp z \rp - \dfrac{I_E}{z} + \dfrac{1}{2} B_1\lp \dfrac rt \rp - \dfrac{1}{8} B_2\lp \dfrac rt \rp z &+ \dfrac{1}{192} B_4\lp \dfrac rt \rp z^3 \bigg| \\ &\leq \dfrac{J_{E,6}(z)}{30240} |z|^5 + \sum_{k \geq 5} |e_k| \lp 1 + \dfrac{k!}{10 (k-5)!} \rp |z|^k.
\end{align*}
We also have $I_E = \int_0^\infty \frac{dx}{e^x + 1} = \log(2)$ and therefore by Lemma \ref{Xi and L z-Expansions} we have
\begin{align*}
	\bigg| L_{r,t}\lp e^{-z} \rp - \dfrac{\log(2)}{tz} + \dfrac{1}{2} B_1\lp \dfrac{r}{t} \rp - \dfrac{t}{8} B_2\lp\frac{r}{t}\rp z &+ \dfrac{t^3}{192} B_4\lp \frac rt \rp z^3 \bigg| \\ &\leq \dfrac{J_{E,6}(z)}{30240} |tz|^5 + |tz|^5 \sum_{k \geq 5} |e_k| \lp 1 + \dfrac{k!}{10 (k-5)!} \rp |tz|^{k-5},
\end{align*}
which is valid for all for all $|z| < \frac{\pi}{t}$, hence in particular when $\eta < \frac{\pi}{40t}$ and $0 \leq |y| < 10\eta$. We now proceed to estimate each piece on the right-hand side.

Now, let $\alpha = \frac{\pi}{2} \frac{z}{|z|}$. Then we bound $J_{E,6}(z)$ by the decomposition
\begin{align*}
	J_{E,6}(z) = \int_0^\alpha \left| E^{(6)}(w) \right|dw + \int_\alpha^\infty \left| E^{(6)}(w) \right|dw.
\end{align*}
The function $E^{(6)}(z)$ is given by
\begin{align*}
	E^{(6)}(z) = \dfrac{e^z\lp e^z - 1 \rp \lp e^{4z} - 56 e^{3z} + 246 e^{2z} - 56 e^z + 1 \rp}{\lp e^z + 1 \rp^7}.
\end{align*}
By the triangle inequality, we have
\begin{align*}
	\left| E^{(6)}(z) \right| \leq \dfrac{e^\eta \lp e^\eta + 1 \rp \lp e^{4\eta} + 56 e^{3\eta} + 246 e^{2\eta} + 56 e^\eta + 1 \rp}{\lp e^\eta - 1 \rp^7}.
\end{align*}
These bounds entail that for $u = \mathrm{Re}\lp w \rp$ and the major arc $0 \leq |\mathrm{Im}(w)| < 10 u$, we have
\begin{align*}
	\int_{\alpha}^\infty \left| E^{(6)}(w) \right| dw \leq \sqrt{101} \int_{\pi/2}^\infty \dfrac{e^u \lp e^u + 1 \rp \lp e^{4u} + 56 e^{3u} + 246 e^{2u} + 56 e^u + 1 \rp}{\lp e^u - 1 \rp^7} |du| < 81.
\end{align*}
The power series representation of $E^{(6)}(w)$ is valid in the region from $0$ to $\alpha$. Combining the estimates $|w| < \frac{\pi}{2}$, \eqref{Bernoulli Inequality}, \eqref{e_n Evaluation}, the vanishing of $B_{2n+1}$ for $n \geq 1$, and the fact that $\zeta(n)$ is decreasing for $n > 1$, we have
\begin{align*}
	\left| E^{(6)}(w) \right| \leq \sum_{k=6}^\infty \dfrac{2^{k+1} \left|B_{k+1}\right| \pi^{k-6}}{(k-6)! 2^{k-6}} \leq \dfrac{\zeta(8) 2^7}{\pi^7} \sum_{k \geq 3} \dfrac{(2k+2)!}{2^{2k+1} (2k-5)!} < 429.
\end{align*}
Therefore,
\begin{align*}
	J_{E,6}(z) < \dfrac{429\pi}{2} + 81 < 755.
\end{align*}
We may also show using \eqref{Bernoulli Inequality} and \eqref{e_n Evaluation} that $\left| \frac{e_n}{n!} \right| \leq \frac{\pi}{3} \cdot \lp \frac{1}{\pi} \rp^n$, and therefore since $|z| < \frac{\sqrt{101}}{40t}$ we have
\begin{align*}
	\sum_{k \geq 5} |e_k| \lp 1 + \dfrac{k!}{10 (k-5)!} \rp |tz|^{k-5} < \dfrac{1}{3\pi^4} \sum_{k \geq 5} \lp 1 + \dfrac{k!}{10 (k-5)!} \rp \lp \dfrac{\sqrt{101}}{40} \rp^{k-5} < \dfrac{1}{4}.
\end{align*}
Thus,
\begin{align*}
	\left| L_{r,t}\lp e^{-z} \rp - \dfrac{\log(2)}{tz} + \dfrac{1}{2} B_1\lp \dfrac{r}{t} \rp - \dfrac{t}{8} B_2\lp\frac{r}{t}\rp z + \dfrac{t^3}{192} B_4\lp \frac rt \rp z^3 \right| \leq \dfrac{755}{30240} |tz|^5 + \dfrac{|tz|^5}{4} < \dfrac{7}{25} t^5 |z|^5.
\end{align*}
This completes the proof.
\end{proof}

\begin{corollary} \label{L Major Arc Corollary}
Let $0 < r \leq t$ be integers and $z = \eta + iy$ a complex number satisfying $0 \leq |y| < 10\eta$ and $\eta < \frac{\pi}{40t}$. Then
\begin{align*}
    \left| L_{r,t}\lp e^{-z} \rp \right| < \dfrac{14}{|tz|}.
\end{align*}
\end{corollary}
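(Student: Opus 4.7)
The plan is to apply Lemma \ref{L Major Arc} directly and then estimate each term in the resulting approximation. The key observation is that the hypotheses $\eta < \frac{\pi}{40t}$ and $|y|<10\eta$ force $|z| < \sqrt{101}\,\eta < \frac{\sqrt{101}\,\pi}{40t}$, so that $|tz| < \frac{\sqrt{101}\,\pi}{40} < 1$. Consequently, after isolating the main term $\frac{\log(2)}{tz}$, the remaining polynomial-in-$z$ corrections contribute only a small absolute constant to $|tz|\cdot|L_{r,t}(e^{-z})|$, and a generous bound like $14$ follows immediately.

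Concretely, I would invoke Lemma \ref{L Major Arc} and apply the triangle inequality to obtain
\begin{align*}
\left|L_{r,t}\lp e^{-z}\rp\right| \leq \dfrac{\log(2)}{|tz|} + \dfrac{1}{2}\left|B_1\lp \tfrac{r}{t}\rp\right| + \dfrac{t|z|}{8}\left|B_2\lp \tfrac{r}{t}\rp\right| + \dfrac{t^3|z|^3}{192}\left|B_4\lp \tfrac{r}{t}\rp\right| + \dfrac{1}{20}t^5|z|^5.
\end{align*}
I would then bound the Bernoulli polynomial values on $[0,1]$ either directly or via \eqref{Bernoulli Inequality}: $|B_1(r/t)|\le 1/2$, $|B_2(r/t)|\le 1/6$, $|B_4(r/t)|\le 1/30$. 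Multiplying the whole inequality by $|tz|$ and using $|tz|<1$ yields
\begin{align*}
|tz|\cdot\left|L_{r,t}\lp e^{-z}\rp\right| \leq \log(2) + \dfrac{|tz|}{4} + \dfrac{|tz|^2}{48} + \dfrac{|tz|^4}{5760} + \dfrac{|tz|^6}{20},
\end{align*}
and the right-hand side is plainly bounded by a small absolute constant, in particular by $14$. Dividing through by $|tz|$ gives the claim.

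There is no genuine obstacle: all of the work is packaged into Lemma \ref{L Major Arc}, and what remains is bookkeeping. The constant $14$ is clearly not sharp---the same argument actually proves $|L_{r,t}(e^{-z})|<1/|tz|$---but the looser value is presumably chosen for convenience when cascaded into later circle-method estimates. The only structural subtlety to note is that the role of $\eta<\frac{\pi}{40t}$ here is purely to keep $|tz|$ bounded from above; no lower bound on $|tz|$ is needed because the offending $\frac{1}{|tz|}$ behavior is exactly the behavior being quantified on the right-hand side.
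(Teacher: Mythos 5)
Your proof is correct and is essentially the paper's own argument: apply Lemma \ref{L Major Arc}, use the triangle inequality, bound the Bernoulli polynomial values on $[0,1]$, and exploit $|tz|<\frac{\sqrt{101}\pi}{40}<1$ to absorb everything into the constant. The only cosmetic difference is that the paper's proof carries the constant $\frac{7}{25}$ from the end of the lemma's proof rather than the $\frac{1}{20}$ in the lemma's statement, which does not affect the conclusion.
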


\begin{proof}
By the triangle inequality and Lemma \ref{L Major Arc}, we have
\begin{align*}
    \left| L_{r,t}\lp e^{-z} \rp \right| < \dfrac{\log(2)}{t|z|} + \left| \dfrac 12 B_1\lp \dfrac rt \rp \right| + \left| \dfrac t8 B_2\lp \dfrac rt \rp z \right| + \left| \dfrac{t^3}{192} B_4\lp \dfrac rt \rp z^3 \right| + \dfrac{7}{25} |tz|^5.
\end{align*}
The fact that $\eta < \frac{\pi}{40t}$ entails $|z| < \frac{\sqrt{101} \pi}{40t} < \frac{4}{5t}$. Using the trivial bound on $B_1\lp \frac rt \rp$, Lehmer's bound \eqref{Bernoulli Inequality} and $|tz| < \frac{4}{5}$, we obtain
\begin{align*}
    \left| L_{r,t}\lp e^{-z} \rp \right| < \dfrac{\log(2) + \frac{1}{4}|tz| + \frac{5}{96}|tz|^2 + \frac{1}{1344} |tz|^4 + \frac{7}{25} |tz|^6}{|tz|} < \dfrac{14}{|tz|},
\end{align*}
which completes the proof.
\end{proof}

\begin{lemma} \label{Xi Major Arc}
For any integer $t \geq 2$ and any complex number $z = \eta + iy$ with $0 \leq |y| < 10\eta$ and $\eta < \frac{\pi}{40t}$, we have
\begin{align*}
    \bigg| \Log \xi\lp e^{-z} \rp - \dfrac{\pi^2}{12 z} + \dfrac{\log(2)}{2} - \dfrac{z}{24} \bigg| < 471 |z|^8.
\end{align*}
\end{lemma}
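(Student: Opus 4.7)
The plan is to apply Proposition \ref{Euler-Maclaurin General Effective} to
\[
B(w) = \frac{1}{w^2} - \frac{1}{2w} + \sum_{n\geq 0} \frac{B_{n+2}}{(n+2)!} w^n,
\]
for the two parameters $a=1/2$ and $a=1$, with $w=2z$, a common arbitrary $A>0$, and truncation parameter $N=7$. By Lemma \ref{Xi and L z-Expansions}, these two applications assemble into
\[
\Log\xi(e^{-z}) = z\bigl(S_{1/2}(2z) - S_1(2z)\bigr), \qquad S_a(w) := \sum_{m\geq 0} B\bigl((m+a)w\bigr),
\]
so subtracting and multiplying by $z$ will produce the claimed expansion. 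The Laurent coefficients are $c_{-2}=1$, $c_{-1}=-1/2$, $c_n=B_{n+2}/(n+2)!$; the decay and smoothness hypotheses on $B$ in $D_\delta$ are immediate from the $e^{-w}$ factor.

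The cancellations between the two Euler--Maclaurin expansions are the heart of the proof. Because $c_{-2}$ and $c_{-1}$ are independent of $a$, the integral term $I^*_{B,A}/w$ and the $\Log(Aw)+\gamma$ portion of the $c_{-1}$-contribution both cancel. What remains yields, via $\zeta(s,1/2)=(2^s-1)\zeta(s)$ and $\psi(1/2)=-\gamma-2\log 2$, the main terms
\[
\bigl(\zeta(2,1/2)-\zeta(2,1)\bigr)w^{-2} = \frac{\pi^2}{3w^2}, \qquad \frac{c_{-1}}{w}\bigl(\psi(1)-\psi(1/2)\bigr) = -\frac{\log 2}{w}.
\]
In the finite Bernoulli-polynomial sum running through $n=0,\dots,6$, every term but $n=0$ vanishes: for odd $n\in\{1,3,5\}$, $c_n = B_{n+2}/(n+2)! = 0$ since $B_{n+2}$ is an odd-indexed Bernoulli number with index $\geq 3$; for even $n\in\{2,4,6\}$, both $B_{n+1}(1)=B_{n+1}$ and $B_{n+1}(1/2)$ vanish (the second via the symmetry $B_m(1-x)=(-1)^m B_m(x)$). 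The surviving $n=0$ contribution is $-c_0\bigl(B_1(1/2)-B_1(1)\bigr) = -\frac{1}{12}\cdot(-\frac{1}{2}) = \frac{1}{24}$. Multiplying by $z$ and setting $w=2z$ produces precisely $\frac{\pi^2}{12z}-\frac{\log 2}{2}+\frac{z}{24}$.

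It then remains to bound every other contribution by $471|z|^8$. Three residual pieces appear, each doubled because the proposition is applied twice: (i) the effective Euler--Maclaurin error $\frac{M_8 J_{g,8}(aw)}{8!}|w|^7$, where $g(w) := B(w) - w^{-2} + e^{-Aw}/(2w)$; (ii) the Taylor-tail sum $\sum_{k\geq 7}|b_k|\bigl(1+k!/(10(k-7)!)\bigr)|w|^k$ with $b_k := c_k-(-A)^{k+1}c_{-1}/(k+1)!$; and (iii) the infinite tail $\sum_{n\geq 7} c_n^*\bigl(B_{n+1}(1/2)-B_{n+1}(1)\bigr)w^n/(n+1)$ coming from the Proposition, whose even-$n$ terms still vanish by the symmetry argument above so that it begins at $n=7$. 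Under $|z|<\sqrt{101}\pi/80<2/5$, each piece is $O(|w|^7) = O(|z|^7)$, so after the final multiplication by $z$ every piece is $O(|z|^8)$.

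The main obstacle is producing a sharp explicit numerical bound on $J_{g,8}$. Mirroring the $E^{(6)}$ treatment in Lemma \ref{L Major Arc}, I would split the integration ray at modulus $\pi/2$: on the short segment, use the Taylor series of $g^{(8)}$ controlled by Lehmer's bound \eqref{Bernoulli Inequality}; on the long segment, estimate $B^{(8)}(w)$ directly as a rational function of $e^w$ in terms of $u=\mathrm{Re}(w)$, noting that the subtracted pieces $w^{-2}$ and $e^{-Aw}/(2w)$ contribute only tame exponentially-decaying tails. The Taylor-tail sum is controlled by Lehmer's bound for $|B_{k+2}|$ combined with $|b_k|\leq |c_k|+A^{k+1}/(2(k+1)!)$ and the geometric decay $|2z|^{k-7}<(4/5)^{k-7}$. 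Choosing $A$ of moderate size (e.g.\ $A=1$) and tallying all numerical factors, doubled for the two values of $a$ and multiplied by the prefactor $|z|\cdot 2^7=128|z|$, yields the stated bound $471|z|^8$.
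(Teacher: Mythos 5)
Your proposal is correct and follows essentially the same route as the paper: applying Proposition \ref{Euler-Maclaurin General Effective} with $N=7$ (the paper takes $A=1$) to $B$ at $a=\tfrac12$ and $a=1$, exploiting the cancellation of the $a$-independent pieces, evaluating the surviving main terms via $\zeta(2,\tfrac12)$ and $\psi(\tfrac12)$, and bounding $J_{g,8}$ by splitting the ray and treating the far part as a rational function of $e^w$. The only differences are cosmetic numerical choices (the paper splits at $\tfrac{3\pi}{2}$ rather than $\tfrac{\pi}{2}$) and the deferred explicit tallying of constants, which the paper carries out with computer assistance.
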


\begin{proof}
By Lemma \ref{Xi and L z-Expansions}, we have
\begin{align*}
	\Log \xi\lp e^{-z} \rp = z \sum_{m \geq 0} \left[ B\lp \lp m + \frac 12 \rp 2z \rp - B\lp \lp m+1 \rp 2z \rp \right],
\end{align*}
where $B(z) = \frac{e^{-z}}{z\lp 1 - e^{-z} \rp}$. We apply Proposition \ref{Euler-Maclaurin General Effective} with $N = 7$ and $A=1$. Noting that $M_8 = \frac{1}{30}$, $c_{-2} = 1$, and $c_{-1} = - \frac 12$, we have
\begin{align*}
	\bigg| \sum_{m \geq 0} B\lp (m+a)z \rp - \dfrac{\zeta(2,a)}{z^2} - \dfrac{I_{B,1}^*}{z} &- \dfrac{1}{2z} \lp \Log\lp z \rp + \gamma + \psi\lp a \rp \rp - \sum_{n = 0}^\infty c_n^* \dfrac{B_{n+1}(a)}{n+1} z^n \bigg| \\ &\leq \dfrac{J_{g,8}(z)}{1209600} |z|^7 + \sum_{k \geq 7} |b_k| \lp 1 + \dfrac{k!}{10 (k-7)!} \rp |z|^k,
\end{align*}
where $b_k = \frac{B_{k+2}}{(k+2)!} + \frac{(-1)^{k+1}}{2(k+1)!}$ and $g(z) = \frac{e^{-z}}{z\lp 1 - e^{-z} \rp} - \frac{1}{z^2} + \frac{e^{-z}}{2z}$. Note that like $B(z)$, the power series representation of $g(z)$ has radius of convergence $2\pi$. We now reduce the bounds on the right-hand side of the above. Setting $\alpha = \frac{3\pi}{2} \frac{z}{|z|}$, we decompose $J_{g,8}(z)$ as
\begin{align*}
	J_{g,8}(z) = \int_0^\alpha \left| g^{(8)}(w) \right| |dw| + \int_\alpha^\infty \left| g^{(8)}(w) \right| |dw|,
\end{align*}
where the paths proceed radially as originally defined. We first bound $g^{(6)}(w)$ on the interval near zero. Invoking \eqref{Bernoulli Inequality}, we can see that
\begin{align*}
	\left| b_k \right| \leq \frac{1}{12} \cdot \lp \frac{1}{2\pi} \rp^{k} + \frac{1}{2(k+1)!}.
\end{align*}
for all $k$, so for $|w| < \frac{3\pi}{2}$ we have
\begin{align*}
	\left| g^{(8)}(w) \right| \leq \sum_{k \geq 0} \dfrac{(k+8)!}{k!} \lp \frac{1}{12} \cdot \lp \frac{1}{2\pi} \rp^{k+8} + \frac{1}{2(k+9)!} \rp \lp \dfrac{3\pi}{2} \rp^k < 367.
\end{align*}
Thus,
\begin{align*}
	\int_0^\alpha \left| g^{(8)}(w) \right| |dw| < 367\dfrac{3\pi}{2} < 1730.
\end{align*}
Now, $g^{(8)}(w)$ may be written in the form
\begin{align*}
	g^{(8)}(w) = \sum_{j=1}^9 \dfrac{p_j(w)}{\lp e^w - 1 \rp^{9-j} w^j}
\end{align*}
for certain polynomials $p_j(w)$ of degree $j-1$ with non-negative coefficients. For $w$ on the major arc, we have $u = \mathrm{Re}(w) \leq |w| \leq \sqrt{101}u$, and therefore by the triangle inequality we have
\begin{align*}
	\left| g^{(8)}(w) \right| \leq \sum_{j=1}^9 \dfrac{p_j\lp \sqrt{101} u \rp}{\lp e^u - 1 \rp^{9-j} u^j}.
\end{align*}
Integrating with the aid of a computer, we have
\begin{align*}
	\int_\alpha^\infty \left| g^{(8)}(w) \right| |dw| \leq \sqrt{101} \int_{\frac{3\pi}{2}}^\infty  \sum_{j=1}^9 \dfrac{p_j\lp \sqrt{101} u \rp}{\lp e^u - 1 \rp^{9-j} u^j} du < 2206410.
\end{align*}
Therefore,
\begin{align*}
	J_{g,8}(z) < 1730 + 2206410 = 2208140.
\end{align*}
By the previous bound on $\left| b_k \right|$ as well as the fact that $|z| < \frac{2}{5}$ on the major arc, we have that
\begin{align*}
	\sum_{k \geq 7} \left|b_k\right| \lp 1 + \dfrac{k!}{10 (k-7)!} \rp |z|^{k-7} < \sum_{k \geq 7} \lp \frac{1}{12} \cdot \lp \frac{1}{2\pi} \rp^k + \frac{1}{2(k+1)!} \rp \lp 1 + \dfrac{k!}{10 (k-7)!} \rp \lp \dfrac{2}{5} \rp^{k-7} < \dfrac{1}{100}.
\end{align*}
Therefore, by letting $z \mapsto 2z$ and applying the bounds just derived, we obtain
\begin{align*}
	\bigg| \sum_{m \geq 0} B\lp (m+a)2z \rp - \dfrac{\zeta(2,a)}{4z^2} - \dfrac{I_{B,1}^*}{2z} - \dfrac{1}{4z} \lp \Log\lp 2z \rp + \gamma + \psi\lp a \rp \rp &- \sum_{n = 0}^\infty c_n^* \dfrac{B_{n+1}(a)}{n+1} 2^n z^n \bigg| < 235 |z|^7.
\end{align*}
By the expansion from Lemma \ref{Xi and L z-Expansions}, we may conclude immediately that
\begin{align*}
	\bigg| \Log \xi\lp e^{-z} \rp + \dfrac{\zeta(2,1) - \zeta\lp 2, \frac 12 \rp}{4 z} &+ \dfrac{ \psi(1) - \psi\lp \frac 12 \rp}{4} - \sum_{n = 0}^\infty c_n^* \dfrac{B_{n+1}(1) - B_{n+1}\lp \frac 12 \rp}{n+1} 2^n z^{n+1} \bigg| < 470 |z|^8.
\end{align*}

We now proceed to simplify terms in the bounds above. By the definition of $c_n^*$ along with $c_{-1} = -\frac 12$, we may calculate
\begin{align*}
	\sum_{n = 0}^\infty c_n^* \dfrac{B_{n+1}(1) - B_{n+1}\lp \frac 12 \rp}{n+1} 2^n z^{n+1} = \dfrac{z}{24} - \sum_{n \geq 7} \dfrac{(-1)^{n+1}\lp B_{n+1}(1) - B_{n+1}\lp \frac 12 \rp \rp}{(n+1) (n+1)!} 2^{n-1} z^{n+1}.
\end{align*}
Now, because of the identity $\zeta\lp s, \frac 12 \rp = \lp 2^s - 1 \rp \zeta(2)$, we have $\zeta\lp 2, 1 \rp - \zeta\lp 2, \frac 12 \rp = - \frac{\pi^2}{3}$. Furthermore, by \cite[(5.4)]{DLMF} we have 
$\psi(1) = -\gamma$ and $- \psi\lp \frac 12 \rp = -2 \log(2) - \gamma$. Therefore, using the triangle inequality in the form $|x| \leq |x-y| + |y|$ and $|z| < \frac{\pi}{2}$, we may obtain
\begin{align*}
	\bigg| \Log \xi\lp e^{-z} \rp - \dfrac{\pi^2}{12 z} + \dfrac{ \log(2)}{2} - \dfrac{z}{24} \bigg| < 470 |z|^8 + \left| \sum_{n \geq 7} \dfrac{(-1)^{n+1}\lp B_{n+1}(1) - B_{n+1}\lp \frac 12 \rp \rp}{(n+1) (n+1)!} 2^{n-1} z^{n-7} \right| \cdot |z|^8.
\end{align*}
Lehmer's bound \eqref{Bernoulli Inequality} along with the straightforward inequality $\zeta(n+1) \leq \zeta(2) = \frac{\pi^2}{6}$ for $n \geq 1$ implies that
\begin{align*}
	\dfrac{\left| B_{n+1}(1) - B_{n+1}\lp \frac 12 \rp \right|}{(n+1)!} \leq \dfrac{4 \zeta(n+1)}{(2\pi)^{n+1}} \leq \dfrac{\pi}{3 \lp 2\pi \rp^n}
\end{align*}
for $n \geq 1$. Therefore using the fact that $|z| < \frac{\pi}{2}$ on the major arc with $\eta < \frac{\pi}{40t}$, we have
\begin{align*}
	\sum_{n \geq 7} \dfrac{\left| B_{n+1}(1) - B_{n+1}\lp \frac 12 \rp \right|}{(n+1) (n+1)!} 2^{n-1} |z|^{n-7} \leq \dfrac{1}{6\pi^6} \sum_{n \geq 7} \dfrac{1}{(n+1) 2^{n-7}} < 1.
\end{align*}
Putting together all evaluations, we conclude that
\begin{align*}
	\bigg| \log \xi\lp e^{-z} \rp - \dfrac{\pi^2}{12 z} + \dfrac{\log(2)}{2} - \dfrac{z}{24} \bigg| < 471 |z|^8.
\end{align*}
This completes the proof.
\end{proof}

\begin{corollary} \label{Xi Major Arc Corollary}
For any integer $t \geq 2$ and any complex number $z = \eta + iy$ satisfying $0 \leq |y| < 10\eta$ and $\eta < \frac{\pi}{40t}$, we have
\begin{align*}
    \left| \xi\lp e^{-z} \rp - \exp\lp \dfrac{\pi^2}{12z} - \dfrac{\log(2)}{2} + \dfrac{z}{24} \rp \right| < \dfrac{630 |z|^8}{\sqrt{2}} \exp\lp \dfrac{\pi^2}{12|z|} \rp.
\end{align*}
\end{corollary}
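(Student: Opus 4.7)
The plan is to exponentiate the logarithmic bound from Lemma \ref{Xi Major Arc}. Set $A(z) := \dfrac{\pi^2}{12z} - \dfrac{\log(2)}{2} + \dfrac{z}{24}$, so that the lemma reads $\left| \Log \xi(e^{-z}) - A(z) \right| < 471 |z|^8$ under the present hypotheses. Writing
\begin{align*}
    \xi\lp e^{-z} \rp - e^{A(z)} = e^{A(z)} \lp e^{\Log \xi(e^{-z}) - A(z)} - 1 \rp
\end{align*}
reduces the problem to bounding the two factors on the right-hand side.

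For the difference-of-exponentials factor, I would apply the elementary estimate $|e^w - 1| \leq |w| e^{|w|}$ with $w = \Log \xi(e^{-z}) - A(z)$, so that this factor is at most $471 |z|^8 \cdot e^{471|z|^8}$. Under the hypotheses $t \geq 2$, $\eta < \frac{\pi}{40t}$, and $|y| < 10\eta$, one has $|z| \leq \sqrt{101}\,\eta < \frac{\sqrt{101}\,\pi}{80}$, which makes $|z|^8$ of order $10^{-4}$ and hence $e^{471|z|^8}$ close to $1$. For the factor $|e^{A(z)}|$, the key observation is that $\mathrm{Re}(1/z) = \eta/|z|^2 \leq 1/|z|$ since $\eta \leq |z|$, which gives
\begin{align*}
    \left| e^{A(z)} \right| = \exp\lp \dfrac{\pi^2 \eta}{12 |z|^2} - \dfrac{\log(2)}{2} + \dfrac{\eta}{24} \rp \leq \dfrac{e^{\eta/24}}{\sqrt{2}} \exp\lp \dfrac{\pi^2}{12|z|} \rp,
\end{align*}
and again $e^{\eta/24}$ is close to $1$ since $\eta < \pi/80$.

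Multiplying the two bounds produces an estimate of the shape $471 \cdot e^{\eta/24 + 471|z|^8} \cdot \dfrac{|z|^8}{\sqrt{2}} \exp\lp \dfrac{\pi^2}{12|z|} \rp$, and the result follows from the numerical check $\eta/24 + 471 |z|^8 < \log(630/471)$, which is comfortably satisfied in the admissible range. The main obstacle is purely a matter of bookkeeping: one must carefully track the small overheads $e^{\eta/24}$ and $e^{471|z|^8}$ to be sure they collapse into an absolute constant no larger than $630/471$. No conceptually new ingredient beyond exponentiating Lemma \ref{Xi Major Arc} is required.
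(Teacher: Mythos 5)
Your proposal is correct and follows essentially the same route as the paper: the same factorization $\xi(e^{-z}) - e^{A(z)} = e^{A(z)}\lp e^{\Log\xi(e^{-z}) - A(z)} - 1\rp$, the same bound $|e^{A(z)}| \leq \frac{e^{\eta/24}}{\sqrt{2}}\exp\lp\frac{\pi^2}{12|z|}\rp$ via $\eta \leq |z|$, and an equivalent elementary estimate for the difference-of-exponentials factor (you use $|e^w-1|\leq |w|e^{|w|}$ where the paper uses $e^{|w|}-1 < \frac{4}{3}|w|$ for $|w|<0.55$). The final numerical check $471\,e^{\eta/24+471|z|^8} < 630$ does hold on the admissible range, though only with a few percent to spare rather than ``comfortably.''
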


\begin{proof}
Suppose $f(z), g(z), e(z)$ are any three functions that satisfy
\begin{align*}
    \left| \Log f(z) - \Log g(z) \right| \leq e(z)
\end{align*}
for $|z| < \frac{\pi}{40t}$. Note that we may factorize
\begin{align*}
    \left| f(z) - g(z) \right| = \left| \exp\lp \Log f(z) - \Log g(z) \rp - 1 \right| \cdot \left| g(z) \right|.
\end{align*}
Applying this factorization with $f(z) := \xi\lp e^{-z} \rp$ and $g(z) := \exp\lp \frac{\pi^2}{12 z} - \frac{\log(2)}{2} + \frac{z}{24} \rp$ will give the result. Using Lemma \ref{Xi Major Arc} and Taylor series, we have
\begin{align*}
    \left| \exp\lp \Log \xi\lp e^{-z} \rp - \dfrac{\pi^2}{12 z} + \dfrac{\log(2)}{2} - \dfrac{z}{24} \rp - 1 \right| < \sum_{n \geq 1} \dfrac{1}{n!} \lp 471 |z|^8 \rp^n = \exp\lp 471 |z|^8 \rp - 1.
\end{align*}
For $|z| < \frac{\sqrt{101} \pi}{80}$, we have $471 |z|^8 < 0.28$, and since $e^x - 1 < \frac{4}{3} x$ for $0 < x < 0.55$, we have
\begin{align*}
	\left| \exp\lp \Log \xi\lp e^{-z} \rp - \dfrac{\pi^2}{12 z} + \dfrac{\log(2)}{2} - \dfrac{z}{24} \rp - 1 \right| < 628 |z|^8.
\end{align*}
Using $\eta \leq |z|$ and $\eta < \frac{\pi}{80}$, we may conclude that
\begin{align*}
    \left| \exp\lp \dfrac{\pi^2}{12 z} - \dfrac{\log(2)}{2} + \dfrac{z}{24} \rp \right| \leq \exp\lp \dfrac{\pi^2 \eta}{12|z|^2} + \dfrac{\eta}{24} \rp < \dfrac{501}{500\sqrt{2}} \exp\lp \dfrac{\pi}{12|z|}\rp.
\end{align*}
Combining the given bounds completes the proof.
\end{proof}

\subsection{Minor arc effective bounds}

We now calculate effective bounds on both $\xi(q)$ and $L_{r,t}(q)$ for the minor arc $10\eta \leq |y| < \pi$, subject to the additional constraint $\eta < \frac{\pi}{40t} \leq \frac{\pi}{80}$.

\begin{lemma} \label{Xi Minor Arc Bound}
Let $t \geq 2$ be an integer. Assume $z = \eta + iy$ satisfies $10 \eta \leq |y| < \pi$ and $0 < \eta < \frac{\pi}{40t}$. Then we have
\begin{align*}
    \left| \xi\lp e^{-z} \rp \right| < \exp\lp \dfrac{41}{50\eta} \rp.
\end{align*}
\end{lemma}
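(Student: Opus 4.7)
The plan is to compare $|\xi(e^{-z})|$ with $\xi(e^{-\eta})$ and extract the saving forced by the minor-arc condition $|y|\ge 10\eta$. Starting from the factored product
$$|\xi(e^{-z})|^2 \;=\; \xi(e^{-\eta})^2 \prod_{n\ge 1}\!\left(1-\dfrac{2e^{-n\eta}(1-\cos(ny))}{(1+e^{-n\eta})^2}\right),$$
I take logarithms, apply $\log(1-x)\le -x$, and use the identity $2e^{-n\eta}/(1+e^{-n\eta})^2 = 1/(2\cosh^2(n\eta/2))$ to obtain the basic inequality
$$2\log|\xi(e^{-z})| \;\le\; 2\log\xi(e^{-\eta}) - \sum_{n\ge 1}\dfrac{1-\cos(ny)}{2\cosh^2(n\eta/2)}.$$
For the first term I invoke Lemma \ref{Xi Major Arc} at $y=0$ (permissible since $0<\eta<\pi/(40t)$), which gives $\log\xi(e^{-\eta})<\pi^2/(12\eta)-\log(2)/2+\eta/24+471\eta^8$.

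The central step is a uniform effective lower bound on the subtracted sum over the entire minor arc $|y|\in[10\eta,\pi)$. My approach is Poisson summation applied to $g(t)=1/(2\cosh^2(\eta t/2))$, whose Fourier transform is $\hat g(\xi)=2\pi\xi/(\eta^2\sinh(\pi\xi/\eta))$. Poisson gives $\sum_{n\in\Z}g(n)=2/\eta+O(e^{-2\pi^2/\eta})$, hence $\sum_{n\ge 1}g(n)=1/\eta-1/4+O(e^{-2\pi^2/\eta})$. Applying Poisson to the shifted sum $\sum_n g(n)\cos(ny)$ produces evaluations $\hat g(2\pi k\pm y)$; the hypothesis $|y|\ge 10\eta$ forces $|\pi y/\eta|\ge 10\pi$, so the $k=0$ contribution $\hat g(\pm y)$ is $O(e^{-10\pi}/\eta)$, while for $k\neq 0$ one has $|\pi(2\pi k\pm y)/\eta|\ge \pi^2/\eta$, making those contributions exponentially small in $1/\eta$. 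Consequently
$$\sum_{n\ge 1}\dfrac{1-\cos(ny)}{2\cosh^2(n\eta/2)} \;\ge\; \dfrac{1}{\eta}-C_0$$
for an explicit absolute constant $C_0$ controlling the Poisson tails.

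Combining and dividing by $2$ yields $\log|\xi(e^{-z})|\le(\pi^2-6)/(12\eta)+C_1$ with an explicit bounded constant $C_1$. Since $(\pi^2-6)/12\approx 0.322 < 41/50 = 0.82$, and the standing hypothesis $\eta<\pi/80$ makes $C_1$ fit comfortably inside the gap $(41/50-(\pi^2-6)/12)/\eta\approx 0.498/\eta$, the claimed bound $|\xi(e^{-z})|<\exp(41/(50\eta))$ follows with wide margin. The main obstacle is executing the Poisson step with fully explicit error constants, valid uniformly over the full minor arc: one must bound $\hat g(2\pi k\pm y)$ across all $k\in\Z$ and all $y\in[10\eta,\pi)$, being particularly careful when $y$ is close to either endpoint. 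A robust alternative that avoids Poisson altogether is to apply Proposition \ref{Euler-Maclaurin Rapid Decay Effective} to $f(x)=1/(2\cosh^2(x/2))$ for the non-oscillatory part and then use Abel summation on $\sum_n\cos(ny)/(2\cosh^2(n\eta/2))$ for the cosine part; this delivers the same effective savings with only routine bookkeeping from the Euler--Maclaurin machinery already developed in the paper.
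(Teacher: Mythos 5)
Your argument is correct, and it is genuinely different from the paper's. The paper works with $\Log \xi(q) = -\sum_{m\ge1}\frac{(-1)^m q^m}{m(1-q^m)}$, dominates every term by the corresponding term of $\Log P(|q|)=\frac{\pi^2}{6\eta}$, and recovers the minor-arc saving from the $m=1$ term alone via the lower bound $|1-q|>\sqrt{95}\,\eta$ valid for $10\eta\le|y|<\pi$; this yields $|\Log\xi(q)|<\frac{3}{4\eta}$ and hence the stated bound. You instead compare $|\xi(e^{-z})|$ to the real point $\xi(e^{-\eta})$ through the Euler product, control $\log\xi(e^{-\eta})$ by the major-arc expansion (Lemma \ref{Xi Major Arc} at $y=0$, which is legitimate), and harvest the saving from the full oscillatory sum $\sum_{n\ge1}\frac{1-\cos(ny)}{2\cosh^2(n\eta/2)}\ge \frac{1}{\eta}-C_0$. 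Your identities check out (including $\frac{2e^{-n\eta}}{(1+e^{-n\eta})^2}=\frac{1}{2\cosh^2(n\eta/2)}$ and, in the angular-frequency convention, the transform of $\operatorname{sech}^2$), and the resulting exponent $\frac{\pi^2-6}{12\eta}\approx\frac{0.32}{\eta}$ is substantially sharper than the paper's $\frac{3}{4\eta}$; a sharper minor-arc bound would in turn shrink $E_1$ and the thresholds $N_t(n)$ in Corollary \ref{Effective Inequality}. The price is that the Poisson step must be made fully explicit uniformly in $y\in[10\eta,\pi)$, which you correctly flag; your Abel-summation fallback disposes of this cleanly, since $\left|\sum_{n=1}^N\cos(ny)\right|\le\frac{1}{|\sin(y/2)|}\le\frac{\pi}{10\eta}$ and monotonicity of $n\mapsto\frac{1}{2\cosh^2(n\eta/2)}$ give $\left|\sum_{n\ge1}\frac{\cos(ny)}{2\cosh^2(n\eta/2)}\right|\le\frac{\pi}{20\eta}$, still leaving a saving of about $\frac{0.84}{\eta}$ and a final exponent near $\frac{0.40}{\eta}$. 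In short: the paper's proof is shorter and entirely elementary; yours costs more bookkeeping but proves more.
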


\begin{proof}
Let $q = e^{-z}$. Recall that
\begin{align*}
	\Log \xi(q) = - \sum_{m \geq 1} \dfrac{(-1)^m q^m}{m\lp 1 - q^m \rp}.
\end{align*}
By taking absolute values and splitting off the $m=1$ term and noting that $\log P\lp |q| \rp = \sum_{m \geq 1} \frac{|q|^m}{m \lp 1 - |q|^m \rp}$, we have
\begin{align*}
	\left| \Log \xi(q) \right| \leq \Log P\lp |q| \rp - |q| \lp \dfrac{1}{1 - |q|} - \dfrac{1}{|1 - q|} \rp,
\end{align*}
where $P(q) = (q;q)_\infty^{-1}$. To bound $\Log P\lp |q| \rp$, we recall that $|q| = e^{-\eta}$ and use the series expansion
\begin{align*}
	\Log P\lp |q| \rp = \sum_{m \geq 1} \dfrac{|q|^m}{m\lp 1 - |q|^m \rp} = \sum_{m \geq 1} \dfrac{e^{-mx}}{m\lp 1 - e^{-mx} \rp}.
\end{align*}
From the fact that $\frac{e^{-x}}{1 - e^{-x}} < \frac{1}{x}$ for all $x > 0$, we may therefore deduce that
\begin{align} \label{Log-P Bound}
	\Log P\lp |q| \rp < \sum_{m \geq 1} \dfrac{1}{m^2 \eta} = \dfrac{\pi^2}{6\eta}.
\end{align}
Now, we have $\left| 1 - q \right|^2 = 1 - 2 \cos(y) e^{-\eta} + e^{-2\eta}$. In the region $10\eta \leq |y| < \pi$, we have by the fact that $\cos(x)$ is decreasing for $0 < x < \pi$ that $\left| 1 - q \right|^2 \geq 1 - 2 \cos(10\eta) e^{-\eta} + e^{-2\eta}$. It can be checked in an elementary manner that $1 - 2 \cos(10\eta) e^{-\eta} + e^{-2\eta} > 95 \eta^2$, and so we have $\left| 1 - q \right| > \sqrt{95} \eta$. By using the bound $1 - |q| = 1 - e^{-\eta} \leq \eta$, we have for $10\eta \leq |y| < \pi$ and $\eta < \frac{\pi}{80}$ that
\begin{align*}
	\dfrac{1}{|1 - q|} - \dfrac{1}{1 - |q|} < \lp \dfrac{1}{\sqrt{95}} - 1 \rp \dfrac{1}{\eta}.
\end{align*}
Therefore, using $|q| \leq 1$ we have
\begin{align*}
	\left|\Log\lp \xi\lp q \rp \rp \right| \leq \lp \dfrac{\pi^2}{6} + \dfrac{1}{\sqrt{95}} - 1 \rp \dfrac{1}{\eta} < \dfrac{3}{4\eta}.
\end{align*}
Exponentiating completes the proof.
\end{proof}

\begin{lemma} \label{L Minor Arc Bound}
Let $t \geq 2$ and $0 < r \leq t$ be integers. Assume $z = \eta + iy$ is a complex number satisfying $\eta > 0$. Then we have
\begin{align*}
    \left| L_{r,t}\lp e^{-z} \rp \right| < \dfrac{1}{\eta^2}.
\end{align*}
\end{lemma}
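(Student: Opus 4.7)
The plan is to dominate $|L_{r,t}(e^{-z})|$ by a real-valued series in $\eta$ and then reduce the desired inequality to a single elementary estimate via a hyperbolic factorization. Starting from the series definition of $L_{r,t}$, the triangle inequality together with $|1+w| \geq 1 - |w|$ for $|w| < 1$ gives
\begin{align*}
|L_{r,t}(e^{-z})| \leq \sum_{k \geq 0} \frac{e^{-(kt+r)\eta}}{1 - e^{-(kt+r)\eta}},
\end{align*}
which removes the imaginary part $y$ entirely. I would then expand each summand via the geometric series $\frac{x}{1-x} = \sum_{j \geq 1} x^j$, interchange summation (legitimate because all terms are non-negative), and evaluate the inner geometric sum in $k$ to obtain
\begin{align*}
\sum_{k \geq 0} \frac{e^{-(kt+r)\eta}}{1 - e^{-(kt+r)\eta}} = \sum_{j \geq 1} \frac{e^{-jr\eta}}{1 - e^{-jt\eta}} \leq \frac{1}{1 - e^{-t\eta}} \sum_{j \geq 1} e^{-jr\eta} = \frac{1}{(1-e^{-t\eta})(e^{r\eta}-1)},
\end{align*}
where the middle inequality uses $1 - e^{-jt\eta} \geq 1 - e^{-t\eta}$ for every $j \geq 1$.

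It then suffices to show $(1 - e^{-t\eta})(e^{r\eta}-1) \geq \eta^2$ for $\eta > 0$, $r \geq 1$, $t \geq 2$. Both factors are monotonically increasing in $t$ and $r$ respectively, so I would reduce to the extremal case $r = 1$, $t = 2$. Writing each exponential in terms of $e^{\pm \eta/2}$ yields the clean hyperbolic factorization
\begin{align*}
(1 - e^{-2\eta})(e^\eta - 1) = 8 e^{-\eta/2} \cosh(\eta/2) \sinh^2(\eta/2),
\end{align*}
and then the elementary bounds $\cosh(\eta/2) \geq e^{\eta/2}/2$ and $\sinh(\eta/2) \geq \eta/2$ (both strict for $\eta > 0$) collapse the right-hand side to exactly $\eta^2$. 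Chaining everything gives the strict bound $|L_{r,t}(e^{-z})| < 1/\eta^2$.

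The only step requiring any insight is spotting the hyperbolic factorization; once it appears, the remaining estimates are textbook inequalities for $\sinh$ and $\cosh$. I anticipate no real obstacles, because the slack absorbed by the monotonicity reduction down to $r=1$, $t=2$ is exactly enough to produce the clean $\eta^{-2}$ bound without any auxiliary logarithmic factors.
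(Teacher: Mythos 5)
Your proof is correct, and it takes a genuinely different route from the paper's. The paper drops the congruence condition entirely: after the same first triangle-inequality step it bounds $|L_{r,t}(e^{-z})|$ by the full Lambert series $\sum_{m \geq 1} \frac{e^{-m\eta}}{1-e^{-m\eta}} = \sum_{m \geq 1} \sigma_0(m) e^{-m\eta}$, uses $\sigma_0(m) \leq m$ to get $\frac{e^{\eta}}{(e^{\eta}-1)^2}$, and concludes with what is in essence the same inequality $\sinh(\eta/2) > \eta/2$ that closes your argument. You instead keep the arithmetic progression, exchange the double sum the other way, and reduce by monotonicity to the extremal case $r=1$, $t=2$, which your hyperbolic factorization handles cleanly. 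Every step checks out: the interchange of summation is justified by positivity, the reduction to $r=1$, $t=2$ is legitimate since $r \geq 1$ and $t \geq 2$ are forced by the hypotheses, and the strictness of $\cosh(x) > e^{x}/2$ and $\sinh(x) > x$ for $x>0$ delivers the strict inequality the lemma asserts. What your version buys is a sharper intermediate bound, $\frac{1}{(1-e^{-t\eta})(e^{r\eta}-1)} \approx \frac{1}{rt\eta^2}$ for small $\eta$, which reflects the actual $1/t$ decay of $L_{r,t}$; the paper's bound is cruder but uniform and slightly shorter to state. Since the lemma only needs $\eta^{-2}$, both are perfectly adequate.
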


\begin{proof}
Let $q = e^{-z}$ and let $\sigma_0(n) = \sum_{d|n} 1$ be the standard divisor counting function. Then 
\begin{align*}
	\left| L_{r,t}(q) \right| \leq \sum_{m \geq 1} \dfrac{e^{-m\eta}}{1 - e^{-m\eta}} = \sum_{m \geq 1} \sigma_0(m) e^{-m\eta} \leq \sum_{m \geq 1} m e^{-m\eta} = \dfrac{e^\eta}{\lp e^\eta - 1 \rp^2} < \dfrac{1}{\eta^2}.
\end{align*}
This completes the proof.
\end{proof}

\section{Proofs of the Main Results} \label{Effective Proofs}

In this section, we prove the main results. We first prove Theorem \ref{MAIN} and Theorem \ref{Effective MAIN}. We then show how Theorem \ref{Effective MAIN} can be used to prove Corollary \ref{Effective Inequality}.

\subsection{Proof of Theorem \ref{MAIN}}

By Lemmas \ref{L Major Arc} and \ref{Xi Major Arc}, we have $L_{r,t}\lp e^{-z} \rp = \frac{\log(2)}{tz} - \frac{1}{2}\lp \frac{r}{t} - \frac{1}{2} \rp + O(z)$ and $\xi\lp e^{-z} \rp = \frac{1}{\sqrt{2}} e^{\frac{\pi^2}{12z} + O(z)}$ on the major arc. These imply asymptotics for $\mathcal D_{r,t}(q) = L_{r,t}(q) \xi(q)$, which is the generating function for $D_{r,t}(n)$ by Lemma \ref{Generating Function}, that satisfies (1) in Proposition \ref{Wright Circle Method}. Lemmas \ref{L Minor Arc Bound} and \ref{Xi Minor Arc Bound} imply condition (2), and so we may apply Proposition \ref{Wright Circle Method}, which yields the claimed asymptotic formula.

\subsection{Proof of Theorem \ref{Effective MAIN}}

In this section, we complete the proof of Theorem \ref{Effective MAIN} by following the proof of \cite[Proposition 1.8]{NgoRhoades} (which is a version of Wright's circle method slightly different from Proposition \ref{Wright Circle Method}) and making the bounds in each step effective. Let $\mathcal{C}$ be the circle in the complex plane with center 0 and radius $e^{-\eta}$, where $\eta = \frac{\pi}{\sqrt{12n}}$. By Cauchy's formula and Lemma \ref{Generating Function}, we have
\begin{align*}
    D_{r,t}(n) = \dfrac{1}{2\pi i} \int_{\mathcal C} \dfrac{\mathcal{D}_{r,t}(q)}{q^{n+1}} dq = \dfrac{1}{2\pi i} \int_{\mathcal C} \dfrac{L_{r,t}(q) \xi(q)}{q^{n+1}} dq.
\end{align*}
Throughout, we fix $q = e^{-z}$ with $z = \eta + iy$, so that $|q| = e^{-\eta}$. We will estimate $D_{r,t}(n)$ by decomposing this integral into convenient pieces. Choose $\delta > 0$ so that for the major arc $\mathcal{C}_1$, $z = \eta + iy \in \mathcal{C}_1$ satisfies $0 < |y| < 10\eta$. We shall also assume that $\eta < \frac{\pi}{40t}$, which is equivalent to the bound $n > \frac{400t^2}{3}$.

Let $\mathcal{C}_2 := \mathcal{C} \backslash \mathcal{C}_1$ denote the minor arc. Define for $s \geq 0$ the integrals
\begin{align} \label{V Definition}
    V_s(n) &:= \dfrac{1}{2\pi i} \int_{\mathcal{C}_1} \dfrac{z^{s-1}}{q^{n+1}} \exp\lp \dfrac{\pi^2}{12z} - \dfrac{\log(2)}{2} + \dfrac{z}{24} \rp dq \notag \\ &= \dfrac{1}{2\pi \sqrt{2} i} \int_{D_0} z^{s-1} \exp\lp \dfrac{\pi^2}{12z} + \lp n + \dfrac{1}{24} \rp z \rp dz.
\end{align}
We use the integrals $V_s(n)$ to estimate $D_{r,t}(n)$. In particular, we have the decomposition
\begin{align*}
    D_{r,t}(n) - \alpha_0 V_0(n) - \alpha_{1,r} V_1(n) - \alpha_{2,r} V_2(n) - \alpha_{4,r} V_4(n) = E_1 + E_2 + E_3,
\end{align*}
where $\alpha_0 = \frac{\log(2)}{t}$, $\alpha_{1,r} = - \frac{1}{2} B_1\lp \frac rt \rp$, $\alpha_{2,r} = \frac{t}{8} B_2\lp \frac rt \rp$, $\alpha_{4,r} = \frac{t^3}{192} B_4\lp \frac rt \rp$, and
\begin{align*}
    E_1 &:= \dfrac{1}{2\pi i} \int_{\mathcal{C}_2} \dfrac{L_{r,t}(q) \xi(q)}{q^{n+1}} dq, \\
    E_2 &:= \dfrac{1}{2\pi i} \int_{\mathcal{C}_1} \dfrac{L_{r,t}(q) \lp \xi(q) - \exp \lp \dfrac{\pi^2}{12z} - \dfrac{\log(2)}{2} + \dfrac{z}{24} \rp \rp}{q^{n+1}} dq, \\
    E_3 &:= \dfrac{1}{2\pi i} \int_{\mathcal{C}_1} \dfrac{\lp L_{r,t}(q) - \alpha_0 z^{-1} - \alpha_1 - \alpha_2 z  - \alpha_4 z^3 \rp \exp \lp \dfrac{\pi^2}{12z} - \dfrac{\log(2)}{2} + \dfrac{z}{24} \rp}{q^{n+1}} dq.
\end{align*}
Although $\alpha_{i,r}$ for $i > 0$ depends on $r$, we suppress this dependence when $r$ is understood from context. Because $|z|^2 \geq \eta^2$ and $\eta = \frac{\pi}{\sqrt{12n}}$, we have
\begin{align*}
    \left| \exp\lp \dfrac{\pi^2}{12z} + nz \rp \right| = \exp\lp \dfrac{\pi^2 \eta}{12|z|^2} + n \eta \rp \leq \exp\lp \pi \sqrt{\dfrac{n}{3}} \rp.
\end{align*}
Furthermore, we note that
\begin{align*}
    \left| \int_{\mathcal{C}_1} q^{-1} dq \right| = \left| \left[ \Log\lp e^{-z} \rp \right]_{z = \eta - 10 \eta i}^{z = \eta + 10 \eta i} \right| \leq 20 \eta.
\end{align*}
and
\begin{align*}
	\left| \int_{\mathcal{C}_2} q^{-1} dq \right| \leq \mathrm{len}(\mathcal{C}_2) \cdot \max\limits_{z \in \mathcal{C}_2} |z| \leq 4.2 \pi^2.
\end{align*}
We also note that on the major arc $0 < |y| < 10\eta < \pi$ we have $\eta \leq |z| < \sqrt{101} \eta$. Since $\eta < \frac{\pi}{80}$, we also have $|z| < \frac{\sqrt{101}\pi}{80} < \frac{2}{5}$. These inequalities will be used freely in what follows.

To bound $E_3$, we recall that Lemma \ref{L Major Arc} says that for $\eta < \frac{\pi}{40t}$ on the major arc, we have
\begin{align*}
    \left| L_{r,t}(q) - \alpha_0 z^{-1} - \alpha_1 - \alpha_2 z - \alpha_4 z^3 \right| < \dfrac{1}{20} t^5 |z|^5,
\end{align*}
and we therefore have using these equations and numerical estimates that
\begin{align*}
    \left| E_3 \right| &\leq \dfrac{10 \eta}{\pi} \left| L_{r,t}(q) - \alpha_0 z^{-1} - \alpha_1 - \alpha_2 z - \alpha_4 z^3 \right| \left| \exp\lp \dfrac{\pi^2}{12z} - \dfrac{\log(2)}{2} + nz + \dfrac{z}{24} \rp \right| \\ &< \dfrac{14381 t^5}{n^3} \exp\lp \pi \sqrt{\dfrac{n}{3}} \rp.
\end{align*}
To bound $E_2$, we apply Corollary \ref{Xi Major Arc Corollary}, which we recall says
\begin{align*}
	\left| \xi\lp e^{-z} \rp - \exp\lp \dfrac{\pi^2}{12z} - \dfrac{\log(2)}{2} + \dfrac{z}{24} \rp \right| < \dfrac{630|z|^8}{\sqrt{2}} \exp\lp \dfrac{\pi^2}{12|z|} \rp.
\end{align*}
Therefore,
\begin{align*}
    \left| E_2 \right| \leq \dfrac{10 \eta}{\pi}  \left| L_{r,t}(q) \right| \left| \xi(q) - \exp\lp \dfrac{\pi^2}{12z} - \dfrac{\log(2)}{2} + \dfrac{z}{24} \rp \right| \left| \exp\lp nz \rp \right| < \dfrac{945285959087}{t n^4} \exp\lp \pi \sqrt{\dfrac{n}{3}} \rp.
\end{align*}
Finally, using Lemmas \ref{Xi Minor Arc Bound} and \ref{L Minor Arc Bound} we have
\begin{align*}
    \left| E_1 \right| \leq \dfrac{4.2 \pi^2}{2\pi} \left| L_{r,t}(q) \right| \left| \xi(q) \right| \left| \exp\lp nz \rp \right| < 9 n \exp\lp \lp \dfrac{3 \sqrt{3}}{2\pi} + \dfrac{\pi}{\sqrt{12}} \rp \sqrt{n} \rp.
\end{align*}
We have therefore shown that
\begin{align*}
    \left| D_{r,t}(n) - \alpha_0 V_0(n) - \alpha_1 V_1(n) - \alpha_2 V_2(n) - \alpha_4 V_4(n) \right| \leq \mathrm{Err}_t(n)
\end{align*}
where
\begin{align} \label{E_t Definition}
    \mathrm{Err}_t(n) := \dfrac{14381 t^5}{n^3} \exp\lp \pi \sqrt{\dfrac{n}{3}} \rp + \dfrac{945285959087}{t n^4} \exp\lp \pi \sqrt{\dfrac{n}{3}} \rp + 9 n \exp\lp \lp \dfrac{3 \sqrt{3}}{2\pi} + \dfrac{\pi}{\sqrt{12}} \rp \sqrt{n} \rp.
\end{align}
This completes the proof of Theorem \ref{Effective MAIN}. \\

\subsection{Proof of Corollary \ref{Effective Inequality}}
We now wish to resolve the inequality $D_{r,t}(n) \geq D_{s,t}(n)$ for integers $n \geq 0$ and $0 < r < s \leq t$. We define for convenience $\alpha_{j,r}^* := \alpha_{j,r} - \alpha_{j,r+1}$ and
\begin{align*}
    M_{r,t}(n) := \alpha_0 V_0(n) + \alpha_{1,r} V_1(n) + \alpha_{2,r} V_2(n) + \alpha_{4,r} V_4(n).
\end{align*}
Note that since $D_{r,t}(n) - D_{s,t}(n) = \sum_{j=r}^{s-1} D_{j,t}(n) - D_{j+1,t}(n)$, it suffices to prove $D_{r,t}(n) \geq D_{r+1,t}(n)$ for all $n > 8$ and $0 < r < t$. We therefore focus on this inequality.

By Theorem \ref{Effective MAIN} applied to both terms in $D_{r,t}(n) - D_{r+1,t}(n)$, in order to show $D_{r,t}(n) - D_{r+1,t}(n) \geq 0$ it suffices to show
\begin{align*}
    M_{r,t}(n) - M_{r+1,t}(n) \geq 2 \mathrm{Err}_t(n).
\end{align*}
Collecting together like terms and simplifying, this is equivalent to
\begin{align*} 
    \alpha_{1,r}^* V_1(n) + \alpha_{2,r}^* V_2(n) + \alpha_{4,r}^* V_4(n) \geq 2 \mathrm{Err}_t(n).
\end{align*}
We also wish to bound the terms $\alpha_{j,r}^*$ for $j = 1,2,4$. Since $B_1(x) = x - \frac 12$, $B_2(x) = x^2 - x + \frac 16$, and $B_4(x) = x^4 - 2x^3 + x^2 - \frac{1}{30}$, and $1 \leq r < t$ (since $r+1 \leq t$), we have $\alpha_{1,r}^* = \frac{1}{2t}$, $\alpha_{2,r}^* = \frac{t-2r-1}{8t^2} \geq - \frac{3}{16}$, and $\alpha_{4,r}^* \geq - \frac{233}{48}$ for $2 \leq t \leq 10$. Therefore, it would suffice to prove that
\begin{align*}
    \dfrac{V_1(n)}{2t} \geq \dfrac{3}{16} V_2(n) + \dfrac{233}{48} V_4(n) + 2 \mathrm{Err}_t(n).
\end{align*}

Now, note that by the definition of $\widehat{I}_s(n)$ used in Lemma \ref{Bessel Estimates}, we have
\begin{align*}
	V_s(n) = \dfrac{1}{\sqrt{2}} \lp \dfrac{24n+1}{2\pi^2} \rp^{- \frac s2} \widehat{I}_{-s}(n),
\end{align*}
and therefore by Lemma \ref{Bessel Estimates} we may conclude that for $s \geq 1$,
\begin{align*}
	\left| V_s(n) - \dfrac{1}{\sqrt{2}} \lp \dfrac{24n+1}{2\pi^2} \rp^{- \frac s2} I_{-s}\lp \pi \sqrt{\dfrac{1}{3}\lp n + \dfrac{1}{24} \rp} \rp \right| \leq \sqrt{2} \exp\lp \dfrac{3\pi}{4} \sqrt{\dfrac{n}{3}} \rp \int_0^\infty \lp 10 + u \rp^{s - 1} e^{-\lp n + \frac{1}{24} \rp u} du.
\end{align*}
Now by a substitution $u \mapsto \frac{1}{n + \frac{1}{24}}u$, we have
\begin{align*}
	\sqrt{2} \int_0^\infty \lp 10 + u \rp^{-\nu - 1} e^{-\lp n + \frac{1}{24} \rp u} du = \dfrac{24\sqrt{2}}{24n+1} \int_0^\infty \lp 10 + \dfrac{24u}{24n+1} \rp^{s - 1} e^{-u} du
\end{align*}
For $\beta_1 = 1$, $\beta_2 = 11$, and $\beta_4 = 1349$, we may conclude that each of $s = 1, 2, 4$ satisfies
\begin{align*}
	\left| V_s(n) - \dfrac{1}{\sqrt{2}} \lp \dfrac{24n+1}{2\pi^2} \rp^{- \frac s2} I_{-s}\lp \pi \sqrt{\dfrac{1}{3}\lp n + \dfrac{1}{24} \rp} \rp \right| < \dfrac{24 \beta_s \sqrt{2}}{24n+1} \exp\lp \dfrac{3\pi}{4} \sqrt{\dfrac{n}{3}} \rp.
\end{align*}
Therefore, if we set $n' := n + \frac{1}{24}$ for convenience, to prove the desired inequality it would suffice to show that
\begin{align} \label{Reduced Inequality}
	\dfrac{\pi}{4t\sqrt{6n'}} I_{-1}\lp \pi \sqrt{\dfrac{n'}{3}} \rp &\geq \dfrac{\pi^2}{64 n' \sqrt{2}} I_{-2}\lp \pi \sqrt{\dfrac{n'}{3}} \rp + \dfrac{233\pi^4}{6912 \sqrt{2} (n')^2} I_{-4}\lp \pi \sqrt{\dfrac{n'}{3}} \rp \notag \\ &+ \lp \dfrac{1}{t \sqrt{2}} + \dfrac{33 \sqrt{2}}{16} + \dfrac{314317 \sqrt{2}}{48} \rp \dfrac{1}{n'} \exp\lp \dfrac{3\pi}{4} \sqrt{\dfrac{n}{3}} \rp + 2 \mathrm{Err}_t(n).
\end{align}

In summary, we have shown that in order to show that $D_{r,t}(n) \geq D_{s,t}(n)$ for all $0 < r < s \leq t$ for a fixed value of $n$, it suffices to consider the case $s = r+1$ for each $r$, and all of these cases follow from the inequality \eqref{Reduced Inequality} is true. In the process of deriving \eqref{Reduced Inequality}, we have also assumed $n > \frac{400t^2}{3}$. Therefore, we define the integer $N_t(n)$ as the smallest positive integer satisfying $N_t(n) > \frac{400t^2}{3}$ and so that \eqref{Reduced Inequality} is true for all $n > N_t(n)$, from which it follows that $D_{r,t}(n) \geq D_{s,t}(n)$ for all $n > N_t(n)$. The table below gives values of $N_t(n)$, which are computed with the aid of a computer. \\

\begin{center}
\begin{tabular}{|c|c|c|c|c|c|c|c|c|c|c|} \hline
    $t$ & 2 & 3 & 4 & 5 & 6 & 7 & 8 & 9 & 10 \\ \hline
    $N_t(n)$ & 108077 & 112183 & 115240 & 117804 & 120247 & 122994 & 126772 & 133268 & 147752 \\ \hline
\end{tabular}

\vspace{0.1in}

{\small Table 2: Numerics for Corollary \ref{Effective Inequality}.}

\vspace{0.1in}
\end{center}

It therefore only remains to check the possible values of $D_{r,t}(n) - D_{r+1,t}(n)$ for $n \leq N_t(n)$ by computer and determine all possible counterexamples which arise from these cases. All such counterexamples satisfy $n \leq 8$ for $2 \leq t \leq 10$, which completes the proof.

\end{document}